\numberwithin{theorem}{section}
\newcommand{\minus}{\scalebox{0.75}[1.0]{$-$}} %possibility for shorter minus
\newcommand{\R}{\mathcal{R}}
\newcommand{\fatQ}{\mathbb{Q}}
\newcommand{\fatR}{\mathbb{R}}
\newcommand{\fatS}{\mathbb{S}}
\newcommand{\TheTitle}{%Sliding Mode Tangent Space Observer for Linear Time Varying Systems
Non-Uniform %Exponential
Stability, Detectability, and, Sliding Mode Observer Design for Time Varying Systems with Unknown Inputs
}
\newcommand{\ShortTitle}{}
\newcommand{\TheAuthors}{M. Tranninger, S. Zhuk, M. Steinberger, L. Fridman, M. Horn}
\headers{\ShortTitle}{\TheAuthors}
\title{{\TheTitle}%\thanks{Submitted to the editors \today.}
	\and
%\funding{This work was funded by the Fog Research Institute under contract no.~FRI-454.}
}
\author{
Markus Tranninger\thanks{Institute of Automation and Control, Graz University of Technology, Austria	(\email{markus.tranninger@tugraz.at}).}
\and
Sergiy Zhuk\thanks{IBM Research, Damastown, Dublin 15, Ireland
	(\email{sergiy.zhuk@ie.ibm.com}).}
\and
Martin Steinberger\footnotemark[2]
\and
Leonid Fridman\thanks{Facultad de Ingenier\'{i}a,
	Universidad Nacional Aut\'{o}noma de M\'{e}xico, M\'{e}xico D.F., Mexico
	(\email{lfridman@unam.mx})}
\and
Martin Horn\footnotemark[2]
}
\begin{document}
\maketitle

\begin{abstract}
This paper discusses stability and robustness properties of a recently proposed observer algorithm for linear time varying systems. The observer is based on the approximation and subsequent modification of the non-negative Lyapunov exponents which yields a (non-uniform) exponentially stable error system. Theoretical insights in the construction of the observer are given and the error system is analyzed with respect to bounded unknown inputs. Therefor, new conditions for bounded input bounded state stability for linear time varying systems are presented. It is shown that for a specific class of linear time varying systems, a cascaded observer based on higher order sliding mode differentiators can be designed to achieve finite time exact reconstruction of the system states despite the unknown input. A numerical simulation example shows the applicability of the proposed approach.
\end{abstract}

\begin{keywords}
linear time varying system, Lyapunov exponents, state estimation, exact reconstruction, strong observability, sliding mode
\end{keywords}

%\begin{AMS}

%\end{AMS}

\section{Introduction}
Observers which are robust with respect to unknown inputs, so-called unknown input observers are used in a large number of different applications like fault detection~\cite{JIE1996,Patton1989,Edwards2000} or decentralized state estimation~\cite{Pillosu2011}. Most of the contributions deal with linear time invariant (LTI) systems which fulfill the so-called rank condition~\cite{Patton1989}. In~\cite{Edwards2000}, sliding mode unknown input observers for LTI systems which fulfill the rank condition are presented. In~\cite{Ichalal2015a,Chen2017} this work is extended to linear parameter varying (LPV) systems where the system matrices depend on a scheduling parameter. It is shown in~\cite{Bejarano2007} that the rank condition can be relaxed by using higher order sliding mode estimation schemes. Recently,~\cite{Galvan-Guerra2016} generalized the ideas of~\cite{Bejarano2007} to linear time varying (LTV) systems. This generalization is worthwhile as a large number of problems in control and estimation can be formulated as LTV problems including LPV systems or the linearization of nonlinear systems around a trajectory. \par
The observer presented in~\cite{Galvan-Guerra2016} is based on a deterministic interpretation of the Kalman filter, a so-called Least Squares filter~\cite{Willems2004}. This is a particular case of the general minimax filter dealing with bounded unknown inputs and measurement noise as presented in~\cite{SZAPOGN_TAC16}. A main issue regarding this class of observers is the computational complexity. The feedback gain is given by the solution of a differential Riccati equation which might become computationally intractable for systems with a large number of states. Furthermore, geometrical numerical integration schemes must be used to preserve all the quadratic invariants of the observer error equation, e.g. the Lyapunov function which decays along the estimation error trajectory, for long estimation durations~\cite{Frank2014}. Such schemes are typically implicit and require solving the corresponding linear matrix Hamiltonian system which doubles the dimension of the state space and makes the computations even more expensive.
In~\cite{Frank2017} a remedy of this problem in the form of a  numerically efficient observer algorithm is presented. However, unknown inputs are not considered in~\cite{Frank2017}. \par
This work proposes a numerically efficient observer algorithm as an extension of~\cite{Frank2017} for systems with unknown inputs. The observer gain is computed in the sense that only unstable modes of the original system are stabilized in the observer error dynamics. This may reduce the computational effort as typically the dimension of the non-stable subspace is much lower than the dimension of the whole state space. \par
The observer is based on a directional detectability condition which is utilized to stabilize the error equation if the non-stable subspace is regularly observed as time goes towards infinity. This detectability condition is discussed in detail and compared with classical detectability notions for linear time varying systems. The advantage of the observer presented in~\cite{Frank2017} is that it is not required to solve a differential Riccati equation, but a numerically stabilized matrix valued ordinary differential equation on a reduced state space only. The aforementioned advantage comes at a price: the observer cannot be made arbitrarily fast as it does not affect modes other than the non-stable ones even if the system is observable. Furthermore, it only guarantees non-uniform exponential convergence. Thus, conditions for bounded input bounded state stability are presented in this contribution to guarantee stability of the error system in the presence of bounded disturbance inputs. Imposing additional assumptions on the considered system, it is shown that combining the ideas of~\cite{Frank2017} and higher order sliding mode techniques, it is possible to exactly reconstruct the system states in finite time despite an unknown but bounded input.  \par

The paper is structured as follows: in~\cref{sec:preliminaries}, important concepts of Lyapunov stability theory and the computational aspects of the approximation of Lyapunov exponents are summarized. Furthermore the concept of strong forward regularity is introduced in addition to existing notions of forward regularity. Based on the strong forward regularity assumption, conditions for bounded input bounded state stability are presented in~\cref{sec:bibs}. Section \ref{sec:observer} discusses the directional detectability condition and the observer algorithm presented in~\cite{Frank2017}. It is shown that under some additional assumptions, the error system of the proposed observer can be rendered bounded input bounded state stable. It is shown in~\cref{sec:finite_time_reconstruction} that in combination with a higher order sliding mode reconstruction scheme, the system states can be reconstructed in finite time despite the unknown input and the cascaded observer design is carried out step-by-step. Finally, a numerical simulation example given in~\cref{sec:example} shows the applicability of the proposed approach and~\cref{sec:conclusion} concludes the paper.

\paragraph{Notation}
If not stated otherwise, $\|\cdot\|$ denotes the 2-norm of a vector or the corresponding induced matrix norm. If $Q\in\mathds{R}^{n\times m}$ is an orthogonal basis for a subspace in $\mathds{R}^{n\times n}$, $Q_\bot \in \mathds{R}^{n \times n-m}$ is its orthogonal complement. For a Matrix $A\in\mathds{R}^{n\times n}$, $a_{ij}$ denotes the element in the $i$-th row and $j$-th column.

\section{Preliminaries}\label{sec:preliminaries}
\subsection{General Preliminaries}

\begin{definition}[Quasi integrability]
	Let $f(x)$ be a scalar function. Then, $f(x) = f^+(x) - f^-(x)$ with
	\begin{equation}
	f^+(x) \coloneqq \max(f(x),0)\text{ and }f^-(x) \coloneqq \max(-f(x),0)
	\end{equation}
	where $f^+$ and $f^-$ are non-negative. On an interval $[a,b]$ the integrals
	\begin{equation}\label{eq:integrals}
	\int_{a}^{b}f^+(x)dx \text{ and } 	\int_{a}^{b}f^-(x)dx
	\end{equation}
	are either finite or infinite. If one of the integrals in~\eqref{eq:integrals} is finite, then $f(x)$ is quasi-integrable on $[a,b]$ and
	\begin{equation}\label{eq:sumintegrals}
	\int_{a}^{b}f(x)dx \coloneqq \	\int_{a}^{b}f^+(x)dx -\int_{a}^{b}f^-(x)dx.
	\end{equation}
	If both integrals in~\eqref{eq:integrals} are finite, $f(x)$ is integrable on $[a,b]$.
\end{definition}
Note that~\eqref{eq:sumintegrals} is not defined if both integrals in~\eqref{eq:integrals} are infinite.

\subsection{Lyapunov Basis and Lyapunov Exponents}
This section recalls basic concepts of Lyapunov stability theory for linear time varying systems. Consider the system
\begin{equation}\label{eq:lyapsystem}
\dot{x}(t,t_0,x_0) = A(t)x(t,t_0,x_0),\; x\in\mathds{R}^n,\; x(t_0) = x_0
\end{equation}
with $t_0$ as initial time and $A(t)\in\mathds{R}^{n\times n}$ continuous and bounded. Define the function
\begin{equation}
\lambda (x_0) = \limsup_{t\rightarrow \infty} \frac{1}{t-t_0}\log \|x(t,t_0,x_0) \|
\end{equation}
which measures the asymptotic rate of exponential growth or decay of the solution of~\eqref{eq:lyapsystem} with initial condition $x_0$. Considering every $x_0\in\mathds{R}^n$, this function attains at most $n$ distinct values \mbox{$\lambda_1,\ldots,\lambda_s,\;s\leq n$} \cite{Barreira2002}. These numbers are called Lyapunov exponents. Moreover, $d_i\in\mathds N$ is the multiplicity of the corresponding Lyapunov exponent $\lambda_i$ such that $d_1+\cdots +d_s=n$. Without loss of generality it is assumed that the Lyapunov exponents are ordered such that $\lambda_1 \geq \lambda_2 \geq \cdots \geq \lambda_{s}$.
\begin{definition}[Ordered normal Lyapunov basis]
	Under the assumption of $n$ distinct Lyapunov exponents, a basis \mbox{$\mathcal{V} = \left[v_1\;v_2\;\cdots v_n\; \right] \;\in \mathds{R}^{n\times n}$} such that \mbox{$\lambda(v_1)= \lambda_1$}, \mbox{$\lambda(v_2)=\lambda_2$, \ldots,} $\lambda(v_n)=\lambda_n$ is called ordered normal Lyapunov basis.
\end{definition}
A more general formulation where the multiplicity of the Lyapunov exponents may be larger than one is shown in the proof of Theorem 1.2.2. in~\cite{Barreira2002}. The subspace which is spanned by \mbox{$\mathcal{V}_u = \left[v_1\;v_2\;\cdots \;v_k \right] \;\in \mathds{R}^{n\times k}$} such that the corresponding \mbox{$\lambda_1,\;\ldots,\;\lambda_k \geq 0$} is called non-stable tangent space. It is well known that system~\eqref{eq:lyapsystem} is non-uniformly exponentially stable if and only if $\lambda_1<0$ as
\begin{equation}
\forall t_0,\,\epsilon>0\; \exists C_\epsilon >0:\; \|x(t,t_0,x_0)\|\leq C_\epsilon e^{(\lambda_1+\epsilon)(t-t_0)}\|x_0\|,\; \forall t\geq t_0,
\end{equation}
holds, see~\cite[p. 9]{Barreira2002}. Thus, if $\dim \mathcal{V}_u=0$ which is equivalent to $\lambda_1<0$, for any sufficiently small $\epsilon$, $x(t) \rightarrow 0$ as $t\rightarrow \infty$.
In other words, if all Lyapunov exponents are negative, system~\eqref{eq:lyapsystem} is non-uniformly exponentially stable.\par

\subsection{Continuous QR-decomposition}\label{sec:continuousQR}
Consider the (continuous) QR decomposition of a differentiable matrix $X(t)\in\mathds{R}^{n\times m},\;n\geq m$, i.e. $X(t)=Q(t)R(t)$ where $Q\in\mathds{R}^{n\times m}$ is orthogonal i.e. $Q^TQ = I$ and $R\in\mathds{R}^{m\times m}$ is upper triangular. Differentiating $X=QR$ and $Q^TQ=I$ yields
\begin{equation}
\begin{aligned}
\dot{X}&=\dot{Q}R+Q\dot{R} \\
0 &= \dot{Q}^TQ + Q^T\dot{Q},
\end{aligned}
\end{equation}
where $Q^T\dot{Q} = S$ is a skew symmetric matrix. Details on the continuous QR decomposition can be found in~\cite{Dieci1999} and the main results are summarized in the next Lemma.
\begin{lemma}[Continuous QR decomposition,~\cite{Dieci1999}]
	Assume that $X(t)\in \mathds{R}^{n\times m},\; n\geq m$ has full column rank and is $k$ times continuously differentiable. Then, there exists a $k$ times continuously differentiable QR decomposition $X=QR$ with orthogonal $Q$ and upper triangular $R$ such that
	\begin{equation}\label{eq:continuousqr}
	\begin{aligned}
	\dot{R} &= Q^T\dot{X} - Q^T\dot{Q}R = Q^T\dot{X}-SR, \\
	\dot{Q} &= (I-QQ^T)\dot{X}R^{-1}+QS
	\end{aligned}
	\end{equation}
	with $S$ skew symmetric such that $\dot{R}$ is upper triangular. Furthermore, if $X$ is the solution of a matrix differential equation $\dot{X}(t)=A(t)X(t)$, equations~\eqref{eq:continuousqr} simplify to
	\begin{equation}\label{eq:contQRdynamical}
	\begin{aligned}
	\dot{R} &= BR \text{ with } B=(Q^TAQ-S), \\
	\dot{Q} &= (I-QQ^T)AQ+QS,
	\end{aligned}
	\end{equation}
	with $S=-S^T$ and $S_{ij}= (Q^TAQ)_{ij}$ for $i>j$.
\end{lemma}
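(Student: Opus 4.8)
The plan is to prove the Continuous QR Lemma by differentiating the defining relations $X = QR$ and $Q^TQ = I$ and then using the upper-triangularity requirement on $\dot R$ to pin down the skew-symmetric matrix $S$ uniquely. First I would establish the general formulas \eqref{eq:continuousqr}: from $\dot X = \dot Q R + Q\dot R$ and the orthogonality $Q^TQ=I$, left-multiplying by $Q^T$ gives $Q^T\dot X = Q^T\dot Q R + \dot R = SR + \dot R$ (writing $S := Q^T\dot Q$), hence $\dot R = Q^T\dot X - SR$. For $\dot Q$, I would solve $\dot Q R = \dot X - Q\dot R = \dot X - Q(Q^T\dot X - SR) = (I-QQ^T)\dot X + QSR$ and right-multiply by $R^{-1}$ (which exists because $X$ has full column rank, so $R$ is invertible), giving $\dot Q = (I-QQ^T)\dot X R^{-1} + QS$.

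Next I would determine $S$ from the structural constraints. Differentiating $Q^TQ=I$ gives $\dot Q^TQ + Q^T\dot Q = 0$, i.e. $S^T + S = 0$, so $S$ is skew-symmetric; in particular its diagonal vanishes. The matrix $\dot R$ must be upper triangular, and so is $R$; since $R$ is invertible and upper triangular, for the strictly-lower-triangular part of $\dot R = Q^T\dot X - SR$ to vanish we read off the entries with $i>j$. Because $R$ is upper triangular with $r_{ii}\neq 0$, the $(i,j)$ entry of $SR$ for $i>j$ determines $s_{ij}$ recursively (or, more cleanly, one verifies directly that the skew-symmetric $S$ with $s_{ij} = (Q^TAQ)_{ij}$ for $i>j$ is the unique choice making $\dot R$ upper triangular in the dynamical case). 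The diagonal is fixed by skew-symmetry ($s_{ii}=0$) and the upper part by $s_{ij} = -s_{ji}$, so $S$ is completely and uniquely specified.

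For the specialization to $\dot X = A(t)X$, I would substitute $\dot X = AX = AQR$ into the general formulas. Then $Q^T\dot X = Q^TAQR$, so $\dot R = (Q^TAQ - S)R =: BR$, and $(I-QQ^T)\dot X R^{-1} = (I-QQ^T)AQ$, so $\dot Q = (I-QQ^T)AQ + QS$. The condition that $\dot R = (Q^TAQ-S)R$ be upper triangular, with $R$ upper triangular and invertible, forces $B = Q^TAQ - S$ to be upper triangular; since $S$ is skew-symmetric, comparing the strictly lower parts gives $-S_{ij} = -(Q^TAQ)_{ij}$ for $i>j$, i.e. $S_{ij} = (Q^TAQ)_{ij}$ for $i>j$, and then $S_{ji} = -S_{ij}$ and $S_{ii}=0$. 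Finally, existence and $k$-fold differentiability of the factorization follow from standard ODE theory applied to the coupled system for $(Q,R)$ (the right-hand sides are $C^{k-1}$ in $t$ wherever $X$ is $C^k$), with the Stiefel-manifold constraint $Q^TQ=I$ and upper-triangularity of $R$ preserved by the flow, as in~\cite{Dieci1999}.

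The main obstacle is the uniqueness/well-posedness bookkeeping: one must argue carefully that the triangularity constraint on $\dot R$ together with skew-symmetry of $S$ determines $S$ entirely (not just its lower part) and that the resulting ODE system has a global solution keeping $Q$ on the orthogonal Stiefel manifold and $R$ upper triangular with nonvanishing diagonal, so that $R^{-1}$ never blows up. Since the statement is quoted from~\cite{Dieci1999}, I would lean on that reference for the global existence and differentiability, and give the algebraic derivation of \eqref{eq:continuousqr}–\eqref{eq:contQRdynamical} in full.
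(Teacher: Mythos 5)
Your derivation is correct and matches the paper's treatment, which likewise differentiates $X=QR$ and $Q^TQ=I$, identifies $S=Q^T\dot Q$ as skew-symmetric, fixes its strictly lower part by requiring $\dot R$ (equivalently $B$) to be upper triangular, and defers global existence and $k$-fold differentiability to the cited reference~\cite{Dieci1999}. The paper itself gives only this sketch and imports the result, so nothing further is needed.
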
\vspace{0.1cm}
It is pointed out in~\cite[Theorem 3.1]{Dieci1999} that if $X(t)$ loses rank at some time instants and if the matrix obtained by differentiating the columns of $X$ still has full rank at these instants, the QR decomposition still exists and is unique. However, the smoothness properties might get lost.

\subsection{Computation of a Lyapunov basis and approximation of Lyapunov exponents}\label{sec:lyapexponents_computation}
The Lyapunov exponents can be approximated by using the fundamental matrix differential equation corresponding to~\eqref{eq:lyapsystem},
\begin{equation}\label{eq:fundametaleq}
\dot{X} = A(t)X(t),\;X(t)\in\mathds{R}^{n\times n},
\end{equation}
with the initial condition $X(t_0)=[x_{1,0}\;\ldots\;x_{n,0}]$ as an ordered normal Lyapunov basis~\cite{Barreira2002}. However this matrix differential equation is numerically ill conditioned. Thus, based on Perron's lemma~\cite[Lemma 1.3.3]{Barreira2002}, the authors in~\cite{Dieci1997,Frank2017} suggest to numerically approximate the Lyapunov exponents based on the transformation of~\eqref{eq:fundametaleq} to the upper triangular form
\begin{equation} \label{eq:lyaptriangular}
\dot{R} = BR
\end{equation}
by using a regular coordinate transformation \mbox{$R=Q^TX$}. This can be achieved by the full continuous time QR-decomposition~\cite{Dieci1997,Frank2017} with $B$ in the form of~\eqref{eq:contQRdynamical}. It is well known that the stability property based on Lyapunov exponents is not robust under small perturbations~\cite{Barreira2002}. However, assuming the additional condition of forward regularity, stability is preserved in the presence of small perturbations.
\begin{definition}[Forward regularity, \cite{Frank2017,Barreira2002}]
	Let \mbox{$R=[R_1,\ldots,R_n]$} be the unique solution of~\eqref{eq:lyaptriangular} and $B$ as in~\eqref{eq:contQRdynamical}. Then, the Lyapunov exponent $\lambda(R_{0,i})=\lambda_i$ is called forward regular provided
	\begin{equation}\label{eq:regularitycond}
	\limsup_{t\rightarrow \infty} \frac{1}{t-t_0}\int_{t_0}^t B_{ii}(s)ds = \liminf_{t\rightarrow \infty} \frac{1}{t-t_0}\int_{t_0}^t B_{ii}(s)ds,
	\end{equation}
	with $B_{ii}$ as the corresponding diagonal element of $B$ in~\eqref{eq:lyaptriangular} and $R_{0,i} = R_i(t_0)$.
\end{definition}
It is pointed out in~\cite{Frank2017,Barreira2002} that even if this condition seems quite strong, it typically holds. Furthermore, regularity is provided for discrete time systems by Oseledec's theorem and thus it typically holds for systems resulting from discretization in time, see~\cite{Frank2017,oseledec}. It is shown in \cite{Frank2017} that if the system is forward regular then,
\begin{equation}\small \label{eq:lyapforwardregularity}
\lambda_i = \lambda(v_i) = \lim_{t\rightarrow \infty} \frac{1}{t-t_0}\int_{t_0}^t B_{ii}(s)ds = \lim_{t\rightarrow \infty}\frac{1}{t-t_0} \int_{t_0}^t Q_i^T(s)A(s)Q_i(s)ds,
\end{equation}
holds for any ordered Lyapunov basis $\mathcal{V}=\left[v_1\;\cdots\;v_n\right]$ and $Q_i$ as the $i$-th column of $Q$ as in~\eqref{eq:contQRdynamical} for \mbox{$m=n,$} and \mbox{$i=1,\ldots,n$}.
Due to the modified Gram-Schmidt orthogonalization~\cite{Dieci1997}, the Lyapunov exponents obtained by averaging over the diagonal elements of $B$ are ordered such that $\lambda_1\geq \lambda_2 \geq \cdots \geq \lambda_n$. To approximate the corresponding Lyapunov exponents,~\eqref{eq:lyapforwardregularity} can be computed for finite time horizons. If the dimension of the non-stable state space $k$ is known a priori, it is reasonable to approximate only the $k$ largest Lyapunov exponents. Therefor it suffices to solve~\eqref{eq:contQRdynamical} on a reduced state space only such that $\dot{X}_k = A(t)X_k(t)$ and $X_k = QR$ with $Q\in\mathds{R}^{n\times k}$ and $R\in\mathds{R}^{k\times k}$. Due to the modified Gram-Schmidt orthogonalization procedure, for almost every choice of the initial condition $X(t_0)=Q(t_0)R(t_0)$, $Q$ converges to an orthogonal basis for the non-stable tangent subspace.

Now, a stronger concept than forward regularity is introduced. It is shown in the subsequent section that strong forward regularity together with negative Lyapunov exponents is sufficient for bounded input bounded output stability.

\begin{definition}[Strong forward regularity]\label{def:strongregularity}
The Lyapunov exponent $\lambda_i$ is called strongly forward regular if it is forward regular and for an arbitrarily small $\epsilon>0$,
	\begin{equation}
	b_i(t) = \begin{cases}
	B_{ii}(t) - \epsilon & \text{ for } \lambda_i \geq 0\\
	B_{ii}(t) + \epsilon  & \text{ for } \lambda_i < 0
	\end{cases}
	\end{equation}
	is quasi integrable on $[t_0,\infty)$.
\end{definition}

\section{Bounded Input Bounded State Stability}\label{sec:bibs}
 In~\cite{Barreira2010}, conditions for stability under bounded perturbations are stated in terms of admissibility for non-uniform exponential contractions. However, the sufficient conditions given in~\cite[Theorem 6]{Barreira2010} imply a non-uniform bound such that the perturbation should vanish if time tends towards infinity. In this paper, a uniform bound on the perturbation or input is assumed. Hence, stronger conditions than negative and forward regular Lyapunov exponents are needed in this case. This section presents conditions for bounded input bounded state stability of linear time varying systems
\begin{equation}\label{eq:bibs_general}
\dot{x}(t,x_0) = A(t)x(t,x_0)+D(t)w(t),\; x\in\mathds{R}^n,\; x(t_0) = x_0,
\end{equation}
with $D(t)\in \mathds{R}^{n\times m}$ bounded and $A(t)\in \mathds{R}^{n\times n}$ continuous and bounded. Furthermore, it is assumed that the input $w(t)\in \mathds{R}^m$ is bounded according $\|w(t)\|\leq \bar w$. Note that by defining \mbox{$f(t)=D(t)w(t)$}, $f(t)$ is also bounded by some constant $\|f(t)\|\leq \bar f$.

\begin{definition}[Bounded input bounded state stability]
	System~\eqref{eq:bibs_general} is called bounded input bounded state (BIBS) stable if for every bounded input and every initial condition there exists a scalar $c(\bar w,x_0)>0$ such that $\|x(t)\|\leq c$ holds for all $t\geq t_0$.
\end{definition}

\subsection{Scalar Case}
Consider the scalar system
\begin{equation}\label{eq:bibs_scalar}
\dot{x} = a(t)x + f(t) \text{ with } |f(t)| \leq \bar{f}
\end{equation}
and $a(t)$ continuous and bounded. A sufficient condition for BIBS stability of~\eqref{eq:bibs_scalar} is given in the next theorem.
\begin{theorem}\label{thm:bibs_scalar}
	System~\eqref{eq:bibs_scalar} is bounded input bounded state stable if the Lyapunov exponent $\lambda$ of the corresponding homogeneous system \mbox{$\dot{x}=a(t)x$} is strongly forward regular and $\lambda<0$.
\end{theorem}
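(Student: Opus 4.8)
The plan is to estimate the solution via the variation-of-constants formula and to extract a \emph{uniform} exponential bound on the scalar transition function $\Phi(t,s) = \exp\!\left(\int_s^t a(\tau)\dd\tau\right)$ from the strong forward regularity hypothesis. First I would note that in the scalar case the continuous QR-decomposition of the fundamental solution is trivial, with $Q\equiv 1$ and $B_{11}(t)=a(t)$; thus forward regularity reads $\lambda=\lim_{t\to\infty}\frac{1}{t-t_0}\int_{t_0}^t a(s)\dd s$, and strong forward regularity says that $b(t)=a(t)+\epsilon$ is quasi-integrable on $[t_0,\infty)$ for arbitrarily small $\epsilon>0$. Since $\lambda<0$, I fix such an $\epsilon$ small enough that $\lambda+\epsilon<0$.

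The crux is to decide which of the two integrals $\int_{t_0}^\infty b^+\dd s$ and $\int_{t_0}^\infty b^-\dd s$ is the finite one. Because $\frac{1}{t-t_0}\int_{t_0}^t b(s)\dd s\to\lambda+\epsilon<0$, the running integral $\int_{t_0}^t b(s)\dd s$ tends to $-\infty$, so quasi-integrability forces $M\coloneqq\int_{t_0}^\infty b^+(s)\dd s<\infty$. Writing $a=b-\epsilon$ and using $b^-\geq 0$ together with $\int_s^t b^+(\tau)\dd\tau\leq M$ for all $t\geq s\geq t_0$, I obtain
\begin{equation*}
\int_s^t a(\tau)\dd\tau \;=\; \int_s^t b^+(\tau)\dd\tau - \int_s^t b^-(\tau)\dd\tau - \epsilon(t-s) \;\leq\; M - \epsilon(t-s),
\end{equation*}
hence $|\Phi(t,s)|\leq e^{M}e^{-\epsilon(t-s)}$ \emph{uniformly} in $s$. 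This is the technical heart of the argument: forward regularity alone only controls the long-run average of $b$, whereas quasi-integrability is precisely what bounds the partial integrals $\int_s^t b^+\dd\tau$ by the single constant $M$, uniformly in the lower limit $s$.

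Finally I would insert this into $x(t)=\Phi(t,t_0)x_0+\int_{t_0}^t\Phi(t,s)f(s)\dd s$ and estimate, for all $t\geq t_0$,
\begin{equation*}
|x(t)| \;\leq\; e^{M}|x_0| + \bar f\, e^{M}\int_{t_0}^t e^{-\epsilon(t-s)}\dd s \;\leq\; e^{M}|x_0| + \frac{\bar f\, e^{M}}{\epsilon},
\end{equation*}
so that $c(\bar f,x_0)\coloneqq e^{M}|x_0|+\bar f\, e^{M}/\epsilon$ is the desired uniform bound and BIBS stability follows. I do not expect any genuine obstacle beyond the bookkeeping around quasi-integrability; continuity of $a$ enters only to guarantee existence of solutions and the exponential representation of $\Phi$.
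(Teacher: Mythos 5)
Your proof is correct, and it reaches the conclusion by a somewhat different technical route than the paper, although the decisive observation is the same in both. The shared core is the step you rightly call the crux: since $\frac{1}{t-t_0}\int_{t_0}^{t}b(s)\,\mathrm{d}s\to\lambda+\epsilon<0$ forces $\int_{t_0}^{t}b(s)\,\mathrm{d}s\to-\infty$, quasi-integrability of $b=a+\epsilon$ leaves no choice but $M=\int_{t_0}^{\infty}b^{+}(s)\,\mathrm{d}s<\infty$; the paper's proof rests on exactly this dichotomy. Where you diverge is in how that finiteness is converted into a bound on the state. The paper first majorizes $|x(t)|$ by the solution of the auxiliary system $\dot z=a(t)z+\bar f$, embeds that into the augmented homogeneous system \eqref{eq:scalar_bibs_extended} with the extra state $\xi$ carrying $\bar f/\epsilon$, and then invokes Lozinskii's logarithmic-norm estimate to obtain $\|\Phi(t,t_0)\|_\infty\leq\exp\bigl(\int_{t_0}^{t}\max[a(s)+\epsilon,0]\,\mathrm{d}s\bigr)$, whose exponent is bounded precisely by $M$. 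You instead bound the scalar transition function directly and uniformly in the lower limit, $\int_{s}^{t}a(\tau)\,\mathrm{d}\tau\leq M-\epsilon(t-s)$, and integrate the variation-of-constants formula. Your version buys a genuinely stronger intermediate statement -- a uniform exponential bound $|\Phi(t,s)|\leq e^{M}e^{-\epsilon(t-s)}$ for all $t\geq s\geq t_0$, which makes explicit the paper's later remark that strong forward regularity upgrades non-uniform to uniform exponential convergence -- and it yields an explicit constant $c(\bar f,x_0)=e^{M}|x_0|+\bar f e^{M}/\epsilon$ without any augmented system or logarithmic-norm machinery. The paper's packaging, on the other hand, keeps everything phrased as stability of a homogeneous linear system, which is the template it reuses in the triangular cascade argument for \cref{thm:bibs}. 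I see no gap in your argument; the only cosmetic caveat is that $M$ depends on $t_0$ and $\epsilon$, which is harmless for the BIBS definition used here since $t_0$ is fixed.
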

\begin{proof}
The solution of~\eqref{eq:bibs_scalar} can be written as
\begin{equation}
x(t) = \Phi(t,t_0)x_0 + \int_{t_0}^{t}\Phi(t,\tau)f(\tau)d\tau,
\end{equation}
with
\begin{equation}
\Phi(t,t_0) = e^{\int_{t_0}^{t}a(\tau)d\tau}.
\end{equation}
For a bounded input as in~\eqref{eq:bibs_scalar},
\begin{equation}\label{eq:bibs_inequ}
\begin{aligned}
|x(t)| &\leq e^{\int_{t_0}^t a(\tau)d\tau}|x_0| + \int_{t_0}^{t} e^{\int_{s}^t a(\tau)d\tau}|f(s)|ds\\
& \leq e^{\int_{t_0}^t a(\tau)d\tau}|x_0| +\bar f \int_{t_0}^{t} e^{\int_{s}^t a(\tau)d\tau}ds,
\end{aligned}
\end{equation}
holds. The bound in the last line of inequality~\eqref{eq:bibs_inequ} is exactly the solution of the auxiliary system
\begin{equation}
\dot{z} = a(t)z + \bar f \text{ with } z_0=z(t_0) = |x_0|.
\end{equation}
This system can be recast as an homogeneous second order system of the form
\begin{equation}\label{eq:scalar_bibs_extended}
\begin{bmatrix}
\dot{z} \\
\dot{\xi}
\end{bmatrix} = \begin{bmatrix}
a(t) & \epsilon \\0 & 0
\end{bmatrix}\begin{bmatrix}
z \\ \xi
\end{bmatrix},
\end{equation}
with an arbitrarily small $\epsilon > 0$ and
\begin{equation}
\xi_0 = \xi(t_0) =  \frac{1}{\epsilon}\bar f,\;z_0 = |x_0|.
\end{equation}
Note that due to~\eqref{eq:bibs_inequ}, stability of~\eqref{eq:scalar_bibs_extended} implies BIBS stability of~\eqref{eq:bibs_scalar}. To derive the stability condition stated in \cref{thm:bibs_scalar}, consider the bound on the state transition matrix based on Lozinskii's estimate using logarithmic norms. Following the results presented in~\cite[Theorem 4.6.3]{Adrianova1995}, the transition matrix can be bounded according to
\begin{equation}\label{eq:scalar_bound}
\|\Phi(t,t_0)\|_{\infty} \leq \text{exp}\left[{\int_{t_0}^{t} \max_\mu\left( a_{\mu\mu}(s)+\sum_{\mu\neq\zeta}|a_{\mu\zeta}(s)| \right) ds}\right]
\end{equation}
with $\|.\|_\infty$ as the induced row sum norm and $a_{\mu\zeta}$ as the coefficients of the system matrix. Writing~\eqref{eq:scalar_bound} explicitly for system~\eqref{eq:scalar_bibs_extended} gives
\begin{equation}\label{eq:bound_scalar}
\|\Phi(t,t_0)\|_\infty \leq e^{\int_{t_0}^{t} \max\left[a(s)+\epsilon,0\right] ds}.
\end{equation}
As by assumption in~\cref{thm:bibs_scalar}, the Lyapunov exponent of $\dot{x} = a(t)x$ is negative,
\begin{equation}
\lambda = \lim_{t\rightarrow \infty} \frac{1}{t-t_0}\int_{t_0}^t a(s)ds < 0,
\end{equation}
holds. Thus, one can select a sufficient small $\epsilon$ such that
\begin{equation}
\lim_{t\rightarrow \infty} \frac{1}{t-t_0}\int_{t_0}^t \left[a(s)+\epsilon\right] ds = \lambda +\epsilon < 0.
\end{equation}
Together with the strong forward regularity assumption, the integral can be written as
\begin{equation}
\int_{t_0}^t \left[a(s)+\epsilon\right] ds = \int_{t_0}^t \max\left[a(s)+\epsilon,0\right]ds - \int_{t_0}^t \max\left[-a(s)-\epsilon,0\right]ds.
\end{equation}
Now as
\begin{equation}
\lim_{t\rightarrow \infty} \int_{t_0}^t \left[a(s)+\epsilon\right] ds = -\infty,
\end{equation}
by quasi-integrability the integral
\begin{equation}
\int_{t_0}^t \max(a(s)+\epsilon,0)ds
\end{equation}
converges for $t\rightarrow\infty$ and thus
\begin{equation}
\lim_{t\rightarrow \infty}e^{\int_{t_0}^{t} \max\left[a(s)+\epsilon,0\right] ds} < \infty.
\end{equation}
This completes the proof.
\end{proof}

\subsection{General case}

Now, the results of the previous section are generalized to system~\eqref{eq:bibs_general}.
Let $X=\fatQ\fatR$ be the unique full QR decomposition of the fundamental matrix $\dot{X}=A(t)X,\,X\in\mathds{R}^{n\times n}$ and transform system~\eqref{eq:bibs_general} in upper-triangular form with
\begin{equation}
\zeta \coloneqq \fatQ^T x,\; \varphi \coloneqq \fatQ^T D(t)w(t).
\end{equation}
Then,
\begin{equation}\label{eq:bibs_uppertriangular}
\dot{\zeta}=B(t)\zeta+\varphi,\,\zeta_0 =\zeta(t_0)= \fatQ^Tx_0\,\in\mathds{R}^n\,
\end{equation} is in upper triangular form with $B(t)$ as in~\eqref{eq:contQRdynamical} and  $\|\zeta(t)\| = \|x(t)\|$. Note that if the system is forward regular, the diagonal elements of $B(t)$ correspond to the Lyapunov exponents according to
\begin{equation}
\lambda_i = \lim_{t\rightarrow \infty}\frac{1}{t-t_0}\int_{t_0}^{t}B_{ii}(s)ds
\end{equation}
as stated in~\eqref{eq:lyapforwardregularity}.
\begin{theorem}\label{thm:bibs}
	If all Lyapunov exponents of~\eqref{eq:bibs_general} in the homogeneous case, i.e. $D(t)w(t)=0,\forall\; t$, are strongly forward regular and negative, system~\eqref{eq:bibs_general} is bounded input bounded state stable.
\end{theorem}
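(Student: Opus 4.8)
The plan is to exploit the upper-triangular structure of the transformed system~\eqref{eq:bibs_uppertriangular} and reduce the claim to a finite sequence of applications of the scalar result \cref{thm:bibs_scalar}, proceeding by backward induction on the components of $\zeta$. Recall that after the orthogonal change of coordinates we have $\dot\zeta = B(t)\zeta + \varphi$ with $B(t)$ upper triangular, $\|\zeta(t)\| = \|x(t)\|$, and $\|\varphi(t)\| = \|\fatQ^T D(t)w(t)\| = \|D(t)w(t)\| \le \bar f$.

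First I would record the preliminary facts that make the reduction legitimate. Since $A(t)$ is bounded and $\fatQ(t)$ is orthogonal, the matrix $B(t) = \fatQ^T A \fatQ - S$ (with $S$ skew-symmetric, $S_{ij} = (\fatQ^T A\fatQ)_{ij}$ for $i>j$) is bounded, hence every entry $B_{ij}(t)$ is a bounded function of $t$; likewise each component $\varphi_i(t)$ is bounded. Moreover, by forward regularity the diagonal entry $B_{ii}$ is exactly the coefficient whose associated scalar homogeneous equation $\dot y = B_{ii}(t)y$ has Lyapunov exponent $\lambda_i$ (this is~\eqref{eq:lyapforwardregularity}), and $\lambda_i$ is negative and strongly forward regular by hypothesis; for a scalar equation strong forward regularity of the exponent is precisely quasi-integrability of $B_{ii}(t)+\epsilon$, so these are exactly the assumptions required by \cref{thm:bibs_scalar} for the $i$-th diagonal subsystem.

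Then I would run the induction from the bottom row upward. The last component obeys $\dot\zeta_n = B_{nn}(t)\zeta_n + \varphi_n(t)$, a scalar equation driven by the bounded signal $\varphi_n$, so \cref{thm:bibs_scalar} gives a constant $c_n$ with $|\zeta_n(t)|\le c_n$ for all $t\ge t_0$. Assuming $|\zeta_j(t)|\le c_j$ for $j=i+1,\dots,n$, upper-triangularity gives $\dot\zeta_i = B_{ii}(t)\zeta_i + g_i(t)$ with $g_i(t) = \sum_{j>i} B_{ij}(t)\zeta_j(t) + \varphi_i(t)$, and by the induction hypothesis together with boundedness of $B_{ij}$ and $\varphi_i$ the driving term $g_i$ is bounded, say $|g_i(t)|\le \bar g_i$; applying \cref{thm:bibs_scalar} once more (with $a=B_{ii}$, $\bar f = \bar g_i$) yields a constant $c_i$ with $|\zeta_i(t)|\le c_i$. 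After $n$ steps, $\|\zeta(t)\|^2 = \sum_{i=1}^n \zeta_i(t)^2 \le \sum_{i=1}^n c_i^2 =: c^2$, and since $\|x(t)\| = \|\zeta(t)\|$ this establishes BIBS stability with $c = c(\bar w, x_0)$.

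The only real point of care is the induction step's assertion that $g_i$ is genuinely bounded: it rests on the uniform-in-$t$ boundedness of the off-diagonal entries $B_{ij}$ — which follows from boundedness of $A$ and orthogonality of $\fatQ$ so that the skew-symmetric correction $S$ inherits the bound — and on the fact that each previously treated component $\zeta_j$ was bounded uniformly in time, not merely convergent. Everything else is a direct finite-depth recursion on \cref{thm:bibs_scalar}, requiring no new Lyapunov-exponent estimates beyond those already established in the scalar case.
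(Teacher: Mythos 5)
Your proposal is correct and follows essentially the same route as the paper's own proof: reduce to the upper-triangular form~\eqref{eq:bibs_uppertriangular} and apply \cref{thm:bibs_scalar} component-by-component via backward induction, folding the already-bounded components $\zeta_j$, $j>i$, into a bounded auxiliary input for the $i$-th scalar subsystem. Your explicit verification that the off-diagonal entries $B_{ij}$ are bounded (via boundedness of $A$ and orthogonality of $\fatQ$) is a detail the paper leaves implicit, but it does not change the argument.
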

\begin{proof}
Consider the last equation $\dot{\zeta}_n = B_{nn}\zeta_n + \varphi_n$ of~\eqref{eq:bibs_uppertriangular}. Clearly, this is the scalar case as $\varphi_n$ is bounded and thus it reduces to the condition stated in theorem~\ref{thm:bibs_scalar}. Next, consider
\begin{equation}\label{eq:bibs_proof}
\dot{\zeta}_{n-1} = B_{n-1,n-1}\zeta_{n-1} + B_{n-1,n}\zeta_n + \varphi_{n-1}.
\end{equation}
If $B_{nn}$ fulfills the condition for the scalar case, $\zeta_n$ is bounded and~\eqref{eq:bibs_proof} can be recast to \mbox{$\dot{\zeta}_{n-1} = B_{n-1,n-1}\zeta_{n-1} + \tilde{\varphi}_{n-1}$} with $\tilde{\varphi}_{n-1}=B_{n-1,n}\zeta_n + \varphi_{n-1}$. The auxiliary input $\tilde{\varphi}_{n-1}$ is bounded and thus this sub-system as well is BIBS stable if the Lyapunov exponent corresponding to $B_{n-1,n-1}$ is negative fulfills the strong regularity condition. Repeating this argument for every component of $\zeta$ in~\eqref{eq:bibs_uppertriangular} completes the proof.
\end{proof}

In the work of Perron~\cite{Perron1930}, necessary and sufficient conditions for BIBS stability are given. However, these conditions might be hard to verify in practice whereas the conditions stated in~\cref{thm:bibs} in fact imply that the diagonal coefficients $B_{ii}(t)$ of the system in triangular form are non-negative only on a finite number of compact time intervals. \\

It is shown in~\cite{Kalman1960} that for linear systems in the form of~\eqref{eq:bibs_general}, bounded input bounded state stability is equivalent to uniform asymptotic stability and uniform exponential stability of the corresponding homogeneous system under the additional assumption that there exists some scalars $c_1\leq c_2$ such that
\begin{equation}\label{eq:bibs_assumption}
0<c_1\leq \|D(t)w\|\leq c_2 < \infty \text{ for all } \|w\| = 1,\; t\geq t_0.
\end{equation}
In~\cite{Silverman1968}, this assumption is relaxed to uniform complete controllability as introduced by Kalman \cite{Kalman1960_optimalcontrol}. Forward regular and negative Lyapunov exponents only guarantee (non-uniform) exponential stability. As an extension of this concept, the strong forward regularity assumption together with negative Lyapunov exponents guarantees uniform convergence of the homogeneous system under the aforementioned assumptions. Now, consider the output
\begin{equation}
y(t)=C(t)x(t)
\end{equation}
of system \eqref{eq:bibs_general} with $C(t)\in\mathds{R}^{r\times n}$ bounded. It is well known that $y(t)$ is bounded if~\eqref{eq:bibs_general} is BIBS stable~\cite{Silverman1968}. This suggests that the input-ouput operator \[
f\mapsto y: y(t)=\int_{t_0}^{t}C(t)\Phi(t,\tau)f(\tau)d\tau
\] is a bounded linear operator from $L^\infty(t_0,+\infty,\mathds{R}^m)$ to $L^\infty(t_0,+\infty,\mathds{R}^r)$.

\section{Detectability and Observer Design}\label{sec:observer}

Consider the system
\begin{equation}\label{eq:system}
\begin{aligned}
\dot{x}(t) &= A(t)x(t) + F(t)u(t) +D(t)w(t) \\
y(t) &= C(t)x(t)
\end{aligned}
\end{equation}
with the state $x\in\mathds{R}^n$, the known input $u\in\mathds{R}^q$, the unknown input $w\in\mathds{R}^m$, the system output $y\in\mathds{R}^r$ and known time varying matrices $A(t),\;F(t),\;D(t)$ and $C(t)$ of appropriate dimensions. Furthermore, it is assumed that the matrices $A(t)$, $D(t)$ and $C(t)$ are bounded and continuous and the unknown input is bounded according to some known constant $\|w(t)\| \leq \bar w$. Furthermore, it is assumed that all Lyapunov exponents of $\dot{X}=A(t)X$ are forward regular.\\
The goal is to design an observer following the ideas of~\cite{Frank2017} such that the error system is bounded input bounded state stable with respect to the unknown input.\\
It is well known for the linear time invariant case, that detectability is necessary and sufficient to design an asymptotically stable (e.g. Luenberger type) observer. However, the concept of detectability in the linear time varying setting is not unique and different definitions for detectability in the time varying setting exists in the literature~\cite{Tai1987}. It is shown that these definitions coincide for special classes of time varying systems like for example constant rank or periodic systems. Detectability according to Wonham~\cite{Wonham1968} is recalled now.

\begin{definition}[Detectability]\label{def:detectability}
	The pair $(A,C)$ is called detectable, if there exists an output feedback matrix $L$ such that the system
	\begin{equation}
	\dot{x} = \left[A(t)-L(t)C(t)\right]x
	\end{equation}
	is asymptotically stable.
\end{definition}

As this definition gives no constructive or verifiable conditions,~\cite{Frank2017} proposes a stabilizing feedback gain based on a directional detectability condition. The observer design together with a stability analysis of the error system is presented in the subsequent section.

\subsection{Observer Design}\label{sec:TSO}

Following~\cite[Corollary 4.4]{Frank2017}, the following observer is proposed for system~\eqref{eq:system} to achieve an exponentially stable error system if no unknown input is present.
\begin{equation}\label{eq:observer}
\begin{aligned}
\dot{\tilde{x}}(t) &= A(t)\tilde{x}(t) + F(t)u(t) + L(t)\left(y(t)-\tilde{y}(t)\right),\;
\end{aligned}
\end{equation}
with $\tilde{y}(t) = C(t)\tilde{x}(t)$ and the observer gain $L(t)$ determined according to
\begin{equation}\label{eq:gain}
L = pQ\tilde{Q}^T C^T,\; p>0,
\end{equation}
with
\begin{equation}\label{eq:Qtilde}
\tilde{Q}\tilde{R}= C^TCQ
\end{equation}
obtained by QR-decomposition and $Q$ as the solution to the matrix differential equation
\begin{equation}\label{eq:diffQ}
\begin{aligned}
\dot{Q}&= (I-QQ^T)AQ+QS,\;\ Q(0)=Q_0 \in\mathds{R}^{n\times k} \\
S &= -S^T,\; S_{ij} = (Q^TAQ)_{ij},\; i>j,
\end{aligned}
\end{equation}
where $k$ is the dimension of the non-stable subspace of $\dot{X}=A(t)X$. Thus, in contrast to the tools presented in~\cref{sec:lyapexponents_computation}, the continuous $QR$-decomposition with $Q$ as in~\eqref{eq:diffQ} is carried out on the non-stable subspace only. Again, due to the modified Gram-Schmidt orthogonalization procedure, $Q$ converges to a basis for the non-stable subspace~\cite{Frank2017}. For sake of simplicity, $\tilde{R}$ is determined by the discrete QR-decomposition at every time instant which can be achieved by using the modified Gram-Schmidt (mGS) algorithm. However, it would also be possible to derive a continuous QR decomposition in the form of~\eqref{eq:continuousqr} if $C^TCQ$ has full rank for almost all $t$, see~\cite{Dieci1999}. The proposed observer gain yields to the concept of directional detectability as introduced in~\cite{Frank2017} for a more general nonlinear setting.

\begin{definition}[Directional detectability]\label{def:directionaldetectability}
	The pair $(A,C)$ is called detectable in the direction of $Q_j$ if
	\begin{equation}\label{eq:detectability}
	\limsup_{t\rightarrow \infty} \frac{1}{t-t_0}\int_{t_0}^{t}\tilde{R}_{jj}(s)ds > 0,
	\end{equation}
	with $Q_j$ as the $j$-th column of $Q$. The pair $(A,C)$ is called asymptotically directionally detectable, if  for every $\lambda_j\geq 0$ with
	\begin{equation}
	\lambda_j = \lim_{t\rightarrow \infty} \frac{1}{t-t_0}\int_{t_0}^{t}B_{jj}(s)ds = \lim_{t\rightarrow \infty} \frac{1}{t-t_0}\int_{t_0}^{t}Q_{j}^T(s)A(s)Q_j(s)ds\geq 0
	\end{equation}
	the pair $(A,C)$ is detectable in the direction of the corresponding $Q_j$.
\end{definition}\vspace{0.2cm}

As already mentioned in~\cref{sec:continuousQR}, differentiability of the feedback gain might get lost if $C^TCQ$ is not of full rank for all $t$. These findings are summarized in the next proposition.
\begin{proposition}[Differentiability of the feedback gain]\ \label{le:differentiability} \\
	Assume that $A(t)$ and $C(t)$ are $\nu$-times continuously differentiable. Then
	\begin{equation}
	\tilde{A}(t) = A(t)-L(t)C(t)
	\end{equation}
	is also $\nu$ times continuously differentiable if $C^TCQ$ according to~\eqref{eq:Qtilde} has full rank for all $t$.
\end{proposition}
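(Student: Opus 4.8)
The plan is to reduce the claim to a smoothness statement about the two orthogonal factors $Q$ and $\tilde{Q}$ that enter the gain formula~\eqref{eq:gain}. Since sums and products of matrix functions that are $\nu$ times continuously differentiable are again $\nu$ times continuously differentiable, and since $\tilde{A}=A-LC$ with $C$ itself $\nu$ times continuously differentiable by assumption, it suffices to prove that $L=pQ\tilde{Q}^TC^T$ is $\nu$ times continuously differentiable, and for that it suffices to prove that both $t\mapsto Q(t)$ and $t\mapsto\tilde{Q}(t)$ are $\nu$ times continuously differentiable.

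First I would treat $Q$. For fixed $t$ the right-hand side of the matrix differential equation~\eqref{eq:diffQ} is a polynomial in the entries of $Q$ whose coefficients are entries of $A(t)$; indeed $S=S(Q,t)$ is linear in $A(t)$ and polynomial in the entries of $Q$. Hence the vector field $(t,Q)\mapsto (I-QQ^T)A(t)Q+QS(Q,t)$ is $\nu$ times continuously differentiable jointly in its arguments, and the classical regularity theorem for ordinary differential equations yields that the solution $Q(t)$ of~\eqref{eq:diffQ} with $Q(0)=Q_0$ is $(\nu+1)$ times continuously differentiable; in particular it is $\nu$ times continuously differentiable.

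Next I would handle $\tilde{Q}$. Since both $C$ and $Q$ are $\nu$ times continuously differentiable, so is the product $C^TCQ$. Under the hypothesis that $C^TCQ$ has full column rank for all $t$, the Continuous QR decomposition lemma (equivalently~\cite{Dieci1999}) applies with $X=C^TCQ$ and produces a $\nu$ times continuously differentiable factorization $\tilde{Q}\tilde{R}=C^TCQ$ with $\tilde{Q}$ orthogonal and $\tilde{R}$ upper triangular. Fixing the standard sign normalization, for instance positive diagonal entries of $\tilde{R}$, makes this factorization unique, so it coincides with the one produced pointwise by the modified Gram--Schmidt step in~\eqref{eq:Qtilde}. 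Therefore $\tilde{Q}$ is $\nu$ times continuously differentiable, and combining this with the previous paragraph shows that $L$, and hence $\tilde{A}=A-LC$, is $\nu$ times continuously differentiable.

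I expect the last step to be the main obstacle: the full-rank assumption on $C^TCQ$ is precisely what is needed for the QR factors to inherit the smoothness of the factored matrix, and, as noted immediately after the Continuous QR lemma, if $C^TCQ$ loses rank at some instants the factorization may still exist and be unique while failing to be differentiable there, so in that degenerate situation an additional hypothesis (such as the matrix obtained by differentiating the columns of $C^TCQ$ retaining full rank) would be needed. A minor point to get right is the identification of the pointwise modified Gram--Schmidt output with the differentiable branch, which is why the sign normalization is invoked.
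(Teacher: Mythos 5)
Your proposal is correct and follows essentially the same route as the paper's (much terser) proof: smoothness of $Q$ is inherited from $A$ via the matrix ODE~\eqref{eq:diffQ}, and smoothness of $\tilde{Q}$ follows from the continuous QR result of~\cite{Dieci1999} applied to $C^TCQ$ under the full-rank hypothesis. Your added details --- the ODE regularity bootstrap for $Q$ and the sign normalization identifying the pointwise modified Gram--Schmidt factorization with the differentiable branch --- are correct refinements of the same argument.
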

\begin{proof}
	If $A$ is $\nu$-times continuously differentiable then the same holds for $Q$ which follows from the continuous QR decomposition~\eqref{eq:contQRdynamical}. Hence, differentiability of the resulting feedback gain might only get lost if $C^TCQ$ looses rank for some time instants, see~\cite{Dieci1999}.
\end{proof}

In the next section, stability of the resulting error system is investigated.

\subsection{Stability analysis of the error system}\ \\
The dynamics of the estimation error \mbox{$e(t) = x(t)-\tilde{x}(t)$} in the presence of unknown inputs can be written as
\begin{equation} \label{eq:errordynamics}
\dot{e}(t) = \left[A(t)-L(t)C(t)\right]e(t) + D(t)w(t)
\end{equation}
with the output error defined as $e_y(t) = Ce(t)$. The existence of a stabilizing feedback gain $L$ is summarized in the next theorem.
\begin{theorem}[Stability of the observer error system]\ \\ \label{thm:stability error system}
	Assume that $w(t)=0$. Then, there exists a parameter $p>0$ for~\eqref{eq:gain} such that~\eqref{eq:errordynamics} is exponentially stable if and only if $(A,C)$ is detectable in any direction $Q_j$ of the non-stable tangent subspace.\\
	Furthermore, assume that $C^TCQ$ has full rank for almost all $t\geq t_0$ and all Lyapunov exponents corresponding to the non-detectable directions are strongly forward regular and negative. Then there exists a large enough $p>0$ such that the perturbed error system
	\begin{equation*}
	\dot{e}(t) = \left[A(t)-L(t)C(t)\right]e(t) + D(t)w(t)
	\end{equation*}
	is bounded input bounded state stable.
\end{theorem}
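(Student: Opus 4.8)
The plan is to split the claim into its two halves and treat them with the machinery already assembled. For the first (equivalence) part, I would proceed as in~\cite{Frank2017}: transform the error dynamics~\eqref{eq:errordynamics} with $w\equiv 0$ into the coordinates supplied by the continuous QR decomposition. Writing $Q$ for the orthonormal basis of the non-stable tangent subspace generated by~\eqref{eq:diffQ} and completing it to a full orthogonal frame $[\,Q\;\;Q_\bot\,]$, the change of variables $\eta = Q^T e$, $\eta_\bot = Q_\bot^T e$ block-triangularizes $A-LCC$ modulo the feedback. On the non-stable block the diagonal entries become $B_{jj} - p\,\tilde R_{jj}$ (using $L=pQ\tilde Q^T C^T$ and $\tilde Q\tilde R = C^TCQ$, so that $Q^T L C Q = p\,\tilde R$ has diagonal $p\,\tilde R_{jj}$), while the $Q_\bot$-block keeps the negative Lyapunov exponents $\lambda_{k+1},\dots,\lambda_n<0$ untouched. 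The averaged diagonal of the non-stable block is $\lambda_j - p\,\mu_j$ where $\mu_j=\limsup \frac1{t-t_0}\int_{t_0}^t \tilde R_{jj}$; directional detectability in every non-stable direction means $\mu_j>0$ for all $j\le k$, so choosing $p$ large makes each of these averages negative, hence all Lyapunov exponents of the error system become negative and non-uniform exponential stability follows from~\cite[p.\,9]{Barreira2002}. Conversely, if $(A,C)$ fails to be detectable in some direction $Q_j$ with $\lambda_j\ge0$, then $\mu_j\le0$, no choice of $p>0$ can pull that averaged exponent below zero, the error system retains a non-negative Lyapunov exponent, and exponential stability is impossible; this gives the "only if''.

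For the second (BIBS) part, the idea is to apply \cref{thm:bibs} to the error system once the feedback has been fixed. Proposition~\ref{le:differentiability} and the full-rank hypothesis on $C^TCQ$ (for almost all $t$) guarantee that $\tilde A = A-LCC$ is at least continuous and that the continuous QR decomposition of its fundamental matrix exists and is unique, so the hypotheses of \cref{thm:bibs} are meaningful for~\eqref{eq:errordynamics}. It therefore suffices to show that, for $p$ large enough, every Lyapunov exponent of $\dot e = \tilde A(t)e$ is strongly forward regular and negative. Negativity is exactly what the first part delivers: the detectable directions are driven negative by the large-$p$ feedback, the non-detectable ones are negative by hypothesis, and the stable tangent directions are negative to begin with. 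For strong forward regularity one inspects the diagonal entries of the triangular form of $\tilde A$: on the non-detectable and stable blocks they equal $B_{jj}$ (unchanged), which are strongly forward regular by assumption, so $B_{jj}+\epsilon$ is quasi-integrable there; on the detectable block the diagonal entry is $B_{jj}-p\,\tilde R_{jj}$, and here I would argue that since its average is now strictly negative, $B_{jj}-p\tilde R_{jj}+\epsilon$ has a negative-diverging integral, whence quasi-integrability of its positive part. With negativity and strong forward regularity of all exponents established, \cref{thm:bibs} yields BIBS stability of~\eqref{eq:errordynamics}.

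The main obstacle I anticipate is the strong-forward-regularity bookkeeping on the detectable block. The clean statement that "negative average $\Rightarrow$ quasi-integrable'' is precisely the content recycled from the proof of \cref{thm:bibs_scalar}, but it needs the feedback term $p\,\tilde R_{jj}$ to be compatible with the $\epsilon$-shift in \cref{def:strongregularity} — i.e.\ one must check that the \emph{modified} diagonal entry, not merely its long-run average, lets us invoke quasi-integrability of the positive part, which is automatic once $\int_{t_0}^t(B_{jj}-p\tilde R_{jj}+\epsilon)\,ds\to-\infty$. A second, more technical point is the interaction between the QR frame $Q$ used to \emph{design} $L$ and the QR frame used to \emph{analyze} $\tilde A$: one should confirm that conjugating the error system by $[\,Q\;\;Q_\bot\,]$ produces a genuinely (block-)triangular system whose further triangularization inherits the regularity properties, rather than introducing new off-diagonal couplings that spoil the diagonal averages — this is where the assumption that $C^TCQ$ has full rank for almost all $t$ does its real work, keeping $\tilde Q$, $\tilde R$ and hence $L$ well behaved.
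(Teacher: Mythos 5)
Your first (equivalence) part follows the paper's argument essentially verbatim — transform by the full QR frame, note $Q^TLCQ=p\tilde R$, read off the shifted averaged diagonals $\lambda_j-p\lim\frac{1}{t-t_0}\int\tilde R_{jj}$, and use non-negativity of $\tilde R_{jj}$ for the converse — and that part is fine. The problem is in the second (BIBS) part, specifically in your treatment of strong forward regularity on the detectable block. You claim that because $\int_{t_0}^t\bigl(B_{jj}-p\tilde R_{jj}+\epsilon\bigr)\,ds\to-\infty$, quasi-integrability of the positive part is ``automatic.'' It is not: a negatively diverging integral does not imply that the positive part has finite integral. Take $g(t)=\cos t-\tfrac12$; then $\int_{0}^{t}g\,ds=\sin t-\tfrac t2\to-\infty$, yet $\int_0^\infty g^+\,ds=\int_0^\infty g^-\,ds=\infty$, so $g$ is not quasi-integrable. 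In the proof of \cref{thm:bibs_scalar} the logic runs the other way: quasi-integrability is a \emph{hypothesis}, and only together with the divergence $\int(a+\epsilon)\to-\infty$ does it force the finite integral to be the one of the positive part. You have reversed that implication. Since the theorem's hypotheses grant strong forward regularity only for the \emph{non-detectable} directions, you have no licence to assume quasi-integrability on the detectable block, and your argument there has a genuine hole.

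The paper closes this gap differently: the full-rank assumption on $C^TCQ$ for almost all $t$ gives $\tilde R_{jj}(t)>0$ for almost all $t$, and since the $B_{jj}$ are bounded, one can choose $p$ so large that the \emph{pointwise} inequality $B_{jj}(t)-p\tilde R_{jj}(t)<-\epsilon$ holds for almost all $t$. Then $\bigl(\tilde B_{jj}+\epsilon\bigr)^+=0$ almost everywhere, its integral is trivially finite, and strong forward regularity of the modified exponents follows at once; the remaining directions are covered by hypothesis, and \cref{thm:bibs} applies. So the full-rank condition is not merely there to keep $\tilde Q$, $\tilde R$ well defined, as you suggest at the end — it is the ingredient that makes the modified diagonal entries negative pointwise rather than only on average, which is exactly what \cref{def:strongregularity} requires. (Strictly speaking, even the paper's step needs $\tilde R_{jj}$ bounded away from zero rather than merely positive a.e.\ to get a single finite $p$, but that is an issue with the paper's argument, not a repair of yours.)
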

\begin{proof}
	The first part of the proof is stated in detail in~\cite{Frank2017} and is sketched here to understand the basic idea of the observer algorithm. Denote
	\begin{equation}
	X = \fatQ\fatR,\, X\in\mathds{R}^{n\times n}
	\end{equation}
	as the full QR-decomposition of the fundamental solution of $\dot{x}=A(t)x$, whereas
	\begin{equation}
	X_k = QR,\, X\in\mathds{R}^{n\times k}
	\end{equation}
	is the thin $QR$-decomposition with $k$ as the dimension of the non-stable tangent subspace. Using the Lyapunov transformation $z=\fatQ^T e$, the error system~\eqref{eq:errordynamics} in the new coordinates is
	\begin{equation}
	\begin{aligned}\label{eq:errortransformed}
	\dot{z} &= \left(\fatQ^T A \fatQ -\fatS  -\fatQ^T LC\fatQ\right)z +\fatQ^T Dw\\
	&= \left(B-\fatQ^T LC\fatQ\right)z + \fatQ^T Dw,
	\end{aligned}
	\end{equation}
	with $\fatS$ skew symmetric and $B$ upper triangular as pointed out in~\cref{sec:continuousQR}. Note that $\dot{\fatQ} = \fatQ\fatS$ as $\fatQ$ is square and orthogonal in this case. Furthermore, by~\eqref{eq:gain} it follows that $LC=pQ\tilde{Q}^TC^TC$ and thus
	\begin{equation}\label{eq:errorLuppertrinangular}
	\fatQ^TLC\fatQ = \begin{bmatrix}
	Q^T\\
	Q_\bot^T
	\end{bmatrix}pQ\tilde{Q}^TC^TC\begin{bmatrix}
	Q & Q_\bot
	\end{bmatrix}=p \begin{bmatrix}
	\tilde R & \tilde Q C^T C Q_\bot \\
	0 & 0
	\end{bmatrix}
	\end{equation}
	with $\tilde{Q}$ and $\tilde{R}$ as in~\eqref{eq:Qtilde}. Since the diagonal elements $B_{ii}$ of $B$ correspond to the Lyapunov exponents of the system $\dot{x} = A(t)x$ according to
	\begin{equation}
	\lambda_i = \lim_{t\rightarrow \infty}\frac{1}{t-t_0}\int_{t_0}^{t}B_{ii}(s)\,ds,
	\end{equation}
	the Lyapunov exponents of the homogeneous error system $\dot{e}=(A-LC)e$ are
	\begin{equation}\label{eq:lyapexponents_errorsystem}
	\begin{aligned}
	\mu_i &= \lim_{t\rightarrow\infty}\frac{1}{t-t_0}\int_{t_0}^{t}Q_i^T(A-LC)Q_i\, ds \\
	&= \lambda_i -p\lim_{t\rightarrow \infty}\frac{1}{t-t_0}\int_{t_0}^{t} \tilde{R}_{ii}(s)\,ds,\,i=1,\ldots,k.
	\end{aligned}
	\end{equation}
	Since the diagonal elements $\tilde{R}_{ii}\geq 0$, it follows that the leading $k$ Lyapunov exponents can be made negative with this choice of feedback gain if and only if the directional detectability condition~\eqref{eq:detectability} of~\cref{def:directionaldetectability} holds as in this case there exists a $p>0$ such that
	\begin{equation}
	p\lim_{t\rightarrow \infty}\frac{1}{t-t_0}\int_{t_0}^{t} \tilde{R}_{ii}(s)ds>\lambda_i >0,\,i=1,\ldots,k.
	\end{equation}
	Thus, for a large enough $p>0$, the unperturbed error system $\dot{e}=(A-LC)e$ is exponentially stable.\\
	For the proof of the second part, again consider the transformed error system~\eqref{eq:errortransformed}. The matrix
	\begin{equation}
	\tilde{B} = B-\fatQ^T LC\fatQ
	\end{equation}
	is upper triangular due to~\eqref{eq:errorLuppertrinangular} and the first $k$ diagonal elements are
	\begin{equation}
	\tilde{B}_{ii} = B_{ii}-p\tilde{R}_{ii},\;i=1,\ldots,k.
	\end{equation}
	As the diagonal elements $B_{ii}$ are bounded, the corresponding $\tilde{B}_{ii}$ can be made negative for almost all $t$ if $C^TCQ$ has full rank for almost all $t$. Thus the corresponding Lyapunov exponents $\mu_i$ from equation~\eqref{eq:lyapexponents_errorsystem} are negative and strongly forward regular according to~\cref{def:strongregularity}. As the remaining Lyapunov exponents are assumed to be negative and strongly forward regular, the error system
	\begin{equation}
	\dot{e}=(A-LC)e+Dw
	\end{equation}
	is bounded input bounded state stable according to~\cref{thm:bibs} for any bounded $w(t)$.
\end{proof}

It is clear that the diagonal of $\tilde{R}$ has zero elements if $C^TCQ$ is rank deficient. Hence, from~\eqref{eq:lyapexponents_errorsystem} it follows that for every $\mu_i$, the Lyapunov exponent of the error equation which correspond to \mbox{$\lim_{t\rightarrow \infty}\frac{1}{t}\int_0^t\tilde{R}_{ii}ds = 0$} is not modified by the gain $L$. Since the maximal rank of $C^TCQ$ is $r$ it is possible to assign an arbitrary decay rate to at most $r$ Lyapunov exponents. On the other hand, if $C^T(t)C(t)Q_i(t)=0$ for some specific time instants $t$ and $Q_i(t)$ from the non-stable tangent subspace, the directional detectability condition still holds as the non-stable tangent space spanned by $Q$ should have non-trivial intersection with $\ker C^TC$ most of the time. \par
It should be pointed out that the directional detectability condition ensures the existence of a feedback gain such that $\dot{e}=(A-LC)e$ is exponentially stable. Thus, the directional detectability condition is sufficient for the detectability notion by Wonham presented in~\cref{def:detectability}. The opposite is not true which is demonstrated by a simple counter example: Consider the (linear time invariant) double integrator system
\begin{equation}
\begin{aligned}
\dot{x} &= \begin{bmatrix}
0 & 1 \\
0 & 0
\end{bmatrix} x \\
y &= \begin{bmatrix} 1 & 0\end{bmatrix}x.
\end{aligned}
\end{equation}
Clearly, this system is observable as its observability matrix
\begin{equation}
\begin{bmatrix} C \\ CA\end{bmatrix} = \begin{bmatrix} 1 & 0 \\ 0 & 1\end{bmatrix}
\end{equation}
has full rank. Thus, the system is detectable in the sense of~\cref{def:detectability}. As for linear time invariant systems, the real parts of the eigenvalues correspond to the Lyapunov exponents, there are two non-negative Lyapunov exponents $\lambda_1 = \lambda_2 = 0$. However, as $\text{rank }C=1$, the asymptotic directional detectability condition in definition~\ref{def:directionaldetectability} cannot be fulfilled in this case. Nevertheless, the detectability definition by Wonham as stated in~\cref{def:detectability} is not constructive and hard to verify in practice. Based on the directional detectability notion of~\cref{def:directionaldetectability} it is possible to design a stabilizing feedback gain. Directional detectability is discussed in detail in~\cite{Frank2017} and it is shown that it can also be used to design observer algorithms for non-linear systems.
\par
Imposing additional restrictions on the class of systems, it is possible to exactly reconstruct the system states despite the unknown input by using the presented observer and higher order sliding mode concepts. This strategy is discussed in detail in the following section.

\section{Finite Time Exact Reconstruction}\label{sec:finite_time_reconstruction}
For a specific class of linear time varying systems it is possible to exactly reconstruct the states despite the unknown input.  The concept of strong observability and state reconstruction based on the output and its derivatives are recalled. The proposed exact reconstruction method is a cascaded observer structure based on the observer algorithm presented in~\cref{sec:observer} and a higher order sliding mode (HOSM) corrector. The output error of the observer together with its derivatives obtained by Levant's robust exact differentiator is then used to correct the ``wrong'' observer estimates.

\subsection{Strong observability of linear time varying systems}\label{sec:observability}
As the known input $u(t)$ can always be canceled out in the observer error dynamics only the triple $(A(t),D(t),C(t))$ of system~\eqref{eq:system} is considered subsequently. Furthermore, it is assumed that these matrices are sufficiently differentiable and the unknown input $w(t)$ is assumed to be differentiable and bounded according to $\|w(t)\|\leq \bar w$ with a known $\bar w$. First, recall the definition of strong observability:
\begin{definition}[Strong observability~\cite{Kratz1998,Hautus1983}]\ \\ %\vspace{-0.4cm}
	The triple $(A(t),D(t),C(t))$ is called strongly observable if \mbox{$\dot{x}=A(t)x+D(t)w(t),\,C(t)x(t)= 0$} on some non-degenerate time interval implies that $x(t)= 0$ on this interval for any input $w(t)$.
\end{definition}
The generalized controllability and observability matrices for $(A(t),D(t),C(t))$ are defined as~\cite{Silverman1971}
\begin{equation}
\begin{aligned}\label{eq:observability_matrices}
Q_k(t) &= \begin{bmatrix}
P_0(t) & P_1(t) & \cdots & P_{k-1}(t)
\end{bmatrix} \in \mathds{R}^{n\times km}, \\
R_k^T(t) &= \begin{bmatrix}C_0^T(t) & C_1^T(t) & \cdots & C_{k-1}^T(t)
\end{bmatrix}\in\mathds{R}^{n\times kr}.
\end{aligned}
\end{equation}
Matrices $P_i(t)$ and $C_i(t)$, \mbox{$i =0,\,\ldots,\,k$} are recursively defined according to
\begin{equation}
\begin{aligned}
P_{i+1}(t) &= A(t)P_i(t) + \dot{P}_i(t),\;\;P_0(t) = D(t),\\
C_{i+1}(t) &= C_i(t)A(t) + \dot{C}_i(t),\;\;C_0(t) = C(t).
\end{aligned}
\end{equation}
These generalized controllability and observability matrices are used to specify the class of constant rank systems.
\begin{definition}[Constant rank system,~\cite{Silverman1969}]\ \\
	The system $(A(t),D(t),C(t))$ is called a constant rank system if there exist positive integers $\mu$, $\nu$, $q_c$ and $q_0$ such that $D(t)$, $C(t)$, and $A(t)$ are $\mu$, $\nu$ and max$(\mu,\nu)-1$ times continuously differentiable, respectively, such that
	\begin{equation}\label{eq:obsindex}
	\begin{aligned}
	\text{rank}\,Q_\mu(t) = \text{rank}\,Q_{\mu+1}(t) = q_c\leq n,\;\forall t,\\
	\text{rank}\,R_\nu(t) = \text{rank}\,R_{\nu+1}(t) = q_0\leq n,\;\forall t,\\
	\end{aligned}
	\end{equation}
	holds. The smallest integers $\mu$ and $\nu$ for which relation~\eqref{eq:obsindex} holds are called the controllability and observability index, respectively.
\end{definition}
Note that in the linear time varying case, integers $\mu,\,\nu$ may be strictly greater than $n$. To design the observer in the present contribution, conditions for strong observability together with a reconstruction method presented in~\cite{Kratz1998} are utilized. Therefor, consider the auxiliary matrices

\begin{equation}
\begin{aligned}[c]
\mathcal{D}_{\alpha+1,\alpha}(t) &= C_0(t)D(t) \\
\mathcal{D}_{\alpha+1,0}(t) &= C_{\alpha+1}(t)D(t) + \dot{\mathcal{D}}_{\alpha,0}(t)\\
\mathcal{D}_{\alpha+1,\beta}(t) &= \mathcal{D}_{\alpha,\beta-1}(t)+\dot{\mathcal{D}}_{\alpha,\beta}(t)
\end{aligned}\qquad
\begin{aligned}
\text{for} & \;0\leq\alpha\leq \nu-1, \\
\text{for} & \;1\leq\alpha\leq \nu-1, \\
\text{for} & \;1\leq\beta<\alpha\leq \nu-1. \\
\end{aligned}
\end{equation}
Based on these matrices, a necessary and sufficient condition for strong observability presented in~\cite{Kratz1998} is summarized in the next theorem.
\begin{theorem}[Strong observability,~\cite{Kratz1998}]\ \label{thm:strongobservability} \\
	The constant rank system $(A(t),D(t),C(t))$ is strongly observable on a time interval if and only if
	\begin{equation}
	\text{rank}\,S(t) = \text{rank}\,S^*(t)
	\end{equation}
	holds on this interval for
	\begin{equation}\label{eq:strongobservability} \small
	S(t) = \begin{bmatrix}
	R_\nu(t)& J_\nu(t)
	\end{bmatrix},\;\;S^*(t) = \begin{bmatrix}
	I_n & 0 \\R_\nu(t) & J_\nu(t)
	\end{bmatrix}
	\end{equation}
	with
	\begin{equation}{\label{eq:reldegreematrix}
		J_\nu (t) = \begin{bmatrix}
		0 & 0 & \cdots & 0 \\
		\mathcal{D}_{1,0} & 0 & \cdots & 0 \\
		\mathcal{D}_{2,0} & \mathcal{D}_{2,1} & \cdots & 0 \\
		\vdots & \vdots & \ddots & \vdots \\
		\mathcal{D}_{\nu-1,0} & \mathcal{D}_{\nu-1,1} & \cdots & \mathcal{D}_{\nu-1,\nu-2}
		\end{bmatrix}}\in\mathds{R}^{r\nu \times m(\nu-1)},
	\end{equation}
	and $\nu$ as the observability index.
\end{theorem}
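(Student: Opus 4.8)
The plan is to convert strong observability into a pointwise linear-algebra condition by repeatedly differentiating the output constraint, and then to read the identity $\operatorname{rank} S(t)=\operatorname{rank} S^{*}(t)$ directly off the structure of the kernel of $S(t)$.

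First I would differentiate the constraint. If $\dot x=A(t)x+D(t)w$ and $y:=C(t)x\equiv 0$ on the interval, a routine induction---using the recursions defining $C_i$ and $\mathcal D_{\alpha,\beta}$ and substituting $\dot x=Ax+Dw$ at each step---gives $y^{(\alpha)}=C_\alpha x+\sum_{\beta=0}^{\alpha-1}\mathcal D_{\alpha,\beta}\,w^{(\beta)}$ for $\alpha=0,\dots,\nu-1$. Stacking these $\nu$ identities, and letting $W(t)$ collect $w(t),\dot w(t),\dots,w^{(\nu-2)}(t)$, yields
\begin{equation*}
0=R_\nu(t)\,x(t)+J_\nu(t)\,W(t)\qquad\text{for every }t\text{ in the interval.}
\end{equation*}
The constant-rank hypothesis is exactly what makes $\nu$ the correct truncation order: since $\operatorname{rank} R_\nu=\operatorname{rank} R_{\nu+1}$, one further differentiation imposes no constraint on $x$ not already implied by these $\nu$ relations, so the displayed identity captures all information obtainable from $y\equiv 0$ (equivalently, the constraint $Cx=0$ of the differential-algebraic system $\{\dot x=Ax+Dw,\ Cx=0\}$ is exhausted after $\nu$ differentiations).

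Next I would establish the linear-algebra fact behind the theorem. For fixed $t$, the leading $I_n$ block of $S^{*}(t)$ makes its first $n$ columns linearly independent, and independent of the remaining columns except through $\ker J_\nu(t)$, so $\operatorname{rank} S^{*}(t)=n+\operatorname{rank} J_\nu(t)$; on the other hand $\operatorname{rank} S(t)=\operatorname{rank}\!\begin{bmatrix}R_\nu(t)&J_\nu(t)\end{bmatrix}\le\operatorname{rank} R_\nu(t)+\operatorname{rank} J_\nu(t)\le n+\operatorname{rank} J_\nu(t)$. Hence $\operatorname{rank} S(t)=\operatorname{rank} S^{*}(t)$ holds precisely when the columns of $R_\nu(t)$ stay linearly independent modulo the column space of $J_\nu(t)$, i.e. precisely when $R_\nu(t)x+J_\nu(t)W=0$ forces $x=0$. (Equivalently: $\ker S^{*}(t)\subseteq\ker S(t)$ always, and the two ranks agree iff the state component of every vector in $\ker S(t)$ vanishes.)

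Combining the two steps gives sufficiency at once: if $\operatorname{rank} S(t)=\operatorname{rank} S^{*}(t)$ throughout the interval, then any trajectory with $y\equiv 0$ satisfies $R_\nu(t)x(t)+J_\nu(t)W(t)=0$ and hence $x(t)=0$ at every $t$, so the system is strongly observable. For necessity I would argue by contraposition: if the rank identity fails at some $t_0$, there exist $x_0\ne 0$ and $W_0$ with $R_\nu(t_0)x_0+J_\nu(t_0)W_0=0$, and one must build from this a nonzero unobservable trajectory---a pair $(x(\cdot),w(\cdot))$ on a neighbourhood of $t_0$ with $\dot x=Ax+Dw$, $x(t_0)=x_0$ and $Cx\equiv 0$---which contradicts strong observability. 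I expect this construction to be the main obstacle. The key point is that $x_0\in\{\xi:\exists\,W,\ R_\nu(t_0)\xi+J_\nu(t_0)W=0\}$ is exactly the statement that $x_0$ is a \emph{consistent} initial value of the DAE $\{\dot x=Ax+Dw,\ Cx=0\}$---its subspace of consistent states being precisely what one obtains by differentiating $Cx=0$ exactly $\nu$ times---and the constant-rank hypothesis guarantees that all constraints of this DAE are already exhibited by those $\nu$ differentiations, so every consistent initial value extends to a genuine local solution; the disturbance $w$ keeping $x(\cdot)$ on the (smooth, constant-dimensional) subspace of consistent states is assembled order by order from the differentiated relations. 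Taking $x_0$ as the initial state then yields the required trajectory and completes the equivalence.
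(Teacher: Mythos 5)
The paper itself gives no proof of this theorem: it is quoted verbatim from Kratz (1998) as a known characterization, so there is no in-paper argument to compare your route against. Judged on its own terms, your proposal gets the reduction and the sufficiency direction right. The induction $y^{(\alpha)}=C_\alpha x+\sum_{\beta<\alpha}\mathcal D_{\alpha,\beta}w^{(\beta)}$, the stacked identity $0=R_\nu x+J_\nu W$ along an output-zeroing trajectory, and the linear-algebra equivalence $\operatorname{rank}S=\operatorname{rank}S^{*}\iff\bigl(R_\nu x+J_\nu W=0\Rightarrow x=0\bigr)$ (via $\operatorname{rank}S^{*}=n+\operatorname{rank}J_\nu$ and the trivial-intersection/full-column-rank reading of $\operatorname{rank}[R_\nu\;J_\nu]=n+\operatorname{rank}J_\nu$) are all correct, and together they immediately give that the rank identity implies strong observability.

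The genuine gap is the necessity direction, and you have correctly located it yourself. From a single pointwise kernel element $(x_0,W_0)$ with $R_\nu(t_0)x_0+J_\nu(t_0)W_0=0$ you must manufacture a pair $(x(\cdot),w(\cdot))$ on a non-degenerate interval with $\dot x=Ax+Dw$, $Cx\equiv0$ and $x\not\equiv0$; the vector $W_0$ only prescribes the jet of $w$ at $t_0$, not a function keeping the output zero for all nearby $t$. Your appeal to ``every consistent initial value of the DAE extends to a local solution'' is the right idea but is precisely the nontrivial content: one must show that the weakly unobservable subspace $\mathcal V(t)=R_\nu(t)^{-1}\bigl(\operatorname{Im}J_\nu(t)\bigr)$ has locally constant dimension and depends smoothly on $t$ (this does not follow automatically from constant rank of $R_\nu$ and $Q_\mu$ alone), and then construct a time-varying feedback $w=F(t)x$ rendering $\mathcal V(t)$ invariant for $\dot x=(A+DF)x$ and contained in $\ker C(t)$ --- the time-varying analogue of the $\mathcal V^{*}$-invariance argument for LTI strong observability. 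The ``order by order'' solvability of the differentiated relations for the successive derivatives of $w$ is exactly where the constant-rank hypothesis must be invoked quantitatively, and without carrying out that construction the contrapositive is not established. As it stands your proposal proves one implication and gives a plausible but incomplete programme for the other.
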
\par
\cref{thm:strongobservability} can be directly used to reconstruct the states by using the system output and its derivatives as summarized in the next proposition.
\begin{proposition}[State reconstruction,~\cite{Kratz1998}]\ \label{prop:reconstruction} \\
	Assume that the triple $(A(t),D(t),C(t))$ is a strongly observable constant rank system and $u(t)\equiv 0$. Define a matrix $K(t)\in\mathds{R}^{r\nu\times r\nu}$ such that
	\begin{equation}
	\text{Ker}\,K(t) = \text{Im}\,J_\nu(t)\;\forall t
	\end{equation}
	and furthermore let
	\begin{equation}
	H(t) = R_\nu^T(t)K^T(t)K(t)R_\nu(t).
	\end{equation}
	Then, $H(t)$ is invertible and
	\begin{equation}\label{eq:reconstruction}
	x(t) = H^{-1}(t)R_\nu^T(t) K^T(t) K(t) \hat{y}(t)
	\end{equation}
	holds, with
	\begin{equation}
	\hat{y}(t) =\begin{bmatrix}
	y^T(t)& \dot{y}^T(t) & \cdots & \left(y^{(\nu-1)}\right)^T(t)
	\end{bmatrix}^T.
	\end{equation}
\end{proposition}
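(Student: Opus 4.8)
\emph{Strategy.} The plan is to write the stacked vector of output derivatives $\hat y$ as $\hat y = R_\nu x + J_\nu \hat w$, i.e.\ a known-state term plus a term carrying the (derivatives of the) unknown input, and then to remove the latter by left-multiplication with $K$. Since $u\equiv 0$, repeatedly differentiating $y=C_0x$ and substituting $\dot x = Ax + Dw$, an induction on $k$ using the recursions for $C_i$ and $\mathcal{D}_{\alpha,\beta}$ shows
\begin{equation}\label{eq:prop46_derivative}
y^{(k)}(t) = C_k(t)x(t) + \sum_{j=0}^{k-1}\mathcal{D}_{k,j}(t)\,w^{(j)}(t),\qquad 0\le k\le\nu-1 ,
\end{equation}
which is well defined because $A,D,C$ and $w$ are assumed sufficiently often differentiable. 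Stacking~\eqref{eq:prop46_derivative} for $k=0,\dots,\nu-1$ and recalling $R_\nu^T=[\,C_0^T\ \cdots\ C_{\nu-1}^T\,]$ gives
\begin{equation}\label{eq:prop46_stack}
\hat y(t) = R_\nu(t)x(t) + J_\nu(t)\hat w(t),
\end{equation}
where $\hat w = \begin{bmatrix} w^T & \dot w^T & \cdots & (w^{(\nu-2)})^T\end{bmatrix}^T\in\mathds{R}^{m(\nu-1)}$ and $J_\nu$ is precisely the matrix in~\eqref{eq:reldegreematrix}, because its $(k,j)$ block equals $\mathcal{D}_{k,j}$ for $j<k$ and vanishes otherwise. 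Matching the block structure of $J_\nu$ and $\hat y$ here is routine but needs care.

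Next I would multiply~\eqref{eq:prop46_stack} on the left by $K(t)$. Since $\text{Ker}\,K(t)=\text{Im}\,J_\nu(t)$ we have $K(t)J_\nu(t)=0$, hence $K\hat y = KR_\nu x$; multiplying once more by $R_\nu^TK^T$ yields $R_\nu^TK^TK\hat y = H x$ with $H=R_\nu^TK^TKR_\nu$ as in the statement. Because $H=(KR_\nu)^T(KR_\nu)$ is symmetric positive semidefinite, $H(t)$ is invertible if and only if $K(t)R_\nu(t)$ has trivial kernel, equivalently if and only if $R_\nu(t)v\in\text{Im}\,J_\nu(t)$ implies $v=0$; once this is known, solving $Hx=R_\nu^TK^TK\hat y$ gives exactly~\eqref{eq:reconstruction}.

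The main obstacle is to deduce that kernel property from the strong-observability criterion of~\cref{thm:strongobservability}. I would argue pointwise in $t$: eliminating the block $R_\nu$ in $S^*$ by elementary row operations built from the $I_n$ block shows $\text{rank}\,S^* = n + \text{rank}\,J_\nu$, while $\text{rank}\,S = \text{rank}\,[\,R_\nu\ J_\nu\,] = \text{rank}\,R_\nu + \text{rank}\,J_\nu - \dim(\text{Im}\,R_\nu\cap\text{Im}\,J_\nu)$. On the interval in question $\text{rank}\,S=\text{rank}\,S^*$, so $\text{rank}\,R_\nu - \dim(\text{Im}\,R_\nu\cap\text{Im}\,J_\nu)=n$; since $\text{rank}\,R_\nu\le n$, this forces both $\text{rank}\,R_\nu=n$ (so $R_\nu$ is injective) and $\text{Im}\,R_\nu\cap\text{Im}\,J_\nu=\{0\}$. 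Hence $R_\nu v\in\text{Im}\,J_\nu$ implies $R_\nu v\in\text{Im}\,R_\nu\cap\text{Im}\,J_\nu=\{0\}$ and, by injectivity, $v=0$. Therefore $H(t)$ is invertible for every $t$ in the interval and~\eqref{eq:reconstruction} follows. (Existence of an admissible $K(t)$, e.g.\ the orthogonal projector onto $(\text{Im}\,J_\nu(t))^{\perp}$, is immediate, and under the constant-rank hypothesis it can be chosen with enough regularity in $t$.)
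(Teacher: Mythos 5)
Your proof is correct, and it is genuinely more than what the paper provides: the paper states this proposition without proof, deferring entirely to the cited reference (Kratz, 1998), adding only the remark that $K(t)R_\nu(t)$ has full column rank if and only if the system is strongly observable. Your argument supplies exactly the two ingredients that remark presupposes. The decomposition $\hat y = R_\nu x + J_\nu\hat w$ followed by annihilation of the input term via $KJ_\nu=0$ is the standard route and is what the reference does; your rank bookkeeping for the invertibility of $H$ is clean and correct: from $\operatorname{rank}S^\ast=n+\operatorname{rank}J_\nu$ and $\operatorname{rank}S=\operatorname{rank}R_\nu+\operatorname{rank}J_\nu-\dim(\operatorname{Im}R_\nu\cap\operatorname{Im}J_\nu)$, the strong-observability condition of \cref{thm:strongobservability} forces both $\operatorname{rank}R_\nu=n$ and trivial intersection, which is precisely the trivial-kernel condition for $KR_\nu$ and hence for $H=(KR_\nu)^T(KR_\nu)$. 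One small caveat on the step you flagged as "routine but needs care": if you run the induction literally against the recursion $\mathcal{D}_{\alpha+1,0}=C_{\alpha+1}D+\dot{\mathcal{D}}_{\alpha,0}$ as printed in the paper, the coefficient of $w$ produced by differentiating $y^{(\alpha)}$ is $C_\alpha D+\dot{\mathcal{D}}_{\alpha,0}$, not $C_{\alpha+1}D+\dot{\mathcal{D}}_{\alpha,0}$; this is an indexing discrepancy in the printed recursion rather than in your argument, and it does not affect the strictly lower block-triangular structure of $J_\nu$ in \eqref{eq:reldegreematrix} on which the proof rests, but it is worth noting if the derivation is to be reproduced verbatim.
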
\vspace{0.2cm}
Note that by the choice of $K(t)$, the matrix $K(t)R_\nu(t)$ has full column rank if and only if the triple $(A(t),D(t),C(t))$ is strongly observable, see~\cite{Kratz1998} for a detailed description.

\subsection{Cascaded Observer Design}
If an additional known input $u(t)$ exists, the reconstruction concept presented in~\cref{prop:reconstruction} usually cannot be applied directly as the system would have to be strongly observable with respect to known and unknown inputs. Moreover, if system~\eqref{eq:system} is unstable, Levant's sliding mode differentiator~\cite{Levant2003} cannot be applied directly to obtain the derivatives of the output. The problem thereby is that this differentiator requires a bounded $\nu$-th derivative which does not hold for general unstable systems and thus only semi-global convergence can be guaranteed~\cite{Bejarano2007}.
Under the stated assumption of strong forward regularity, the observer presented in~\cref{sec:observer} can be designed such that it is bounded input bounded state stable. A higher order sliding mode differentiator together with the reconstruction scheme of~\cref{prop:reconstruction} is then applied to the output error $e_y$ of the observer to reconstruct the estimation error $e(t)$. Thus, the ``wrong'' state estimates $\tilde{x}$ can be corrected in this case. The cascaded observer structure is depicted in~\cref{fig:cascadestructure}. This type of cascaded observer was first introduced by~\cite{Bejarano2007} and~\cite{Fridman2007} for the linear time invariant case.

\begin{figure}
	\centering
	\includegraphics[width=0.4\linewidth]{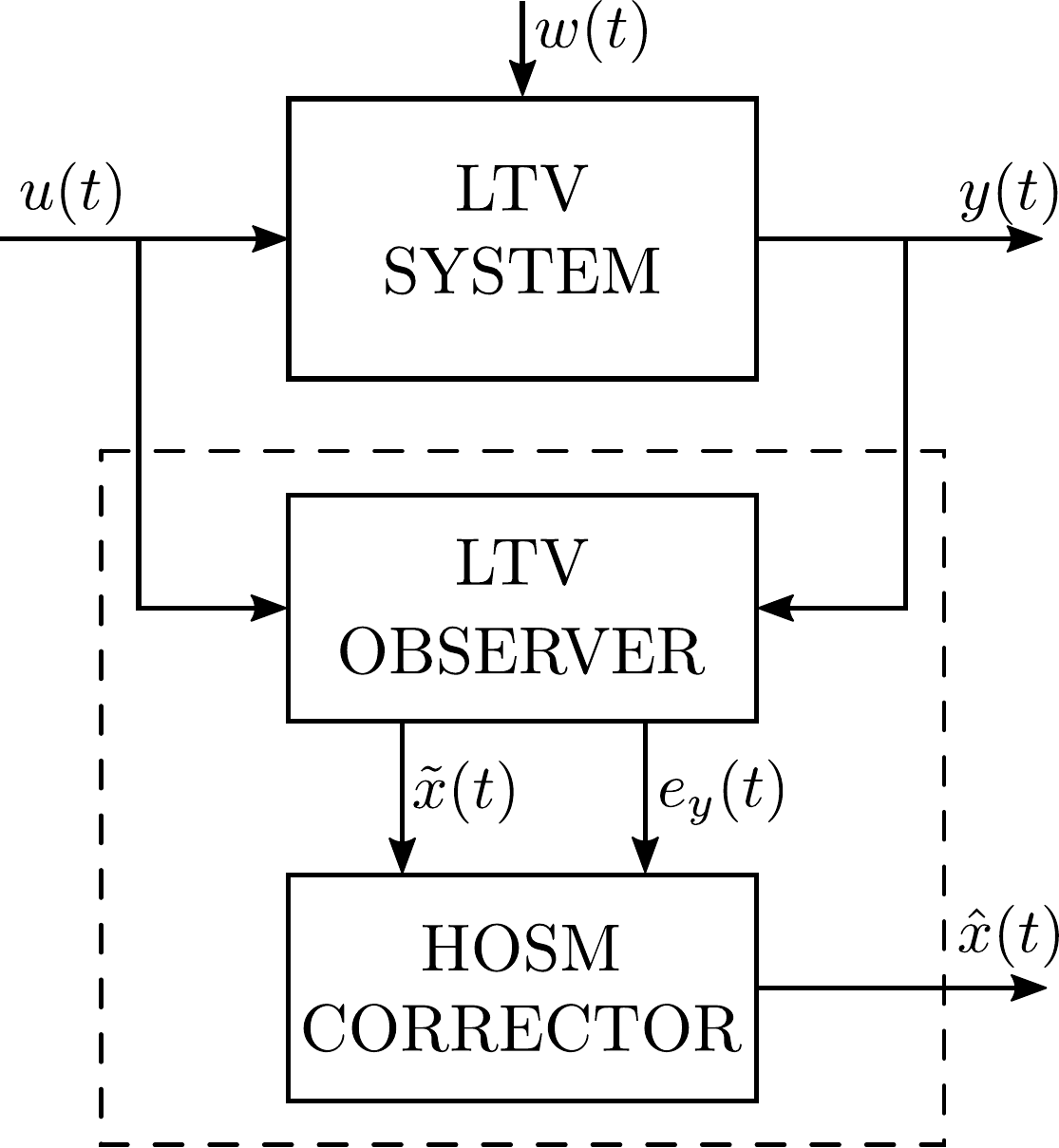}
	\caption{Cascaded observer structure.}
	\label{fig:cascadestructure}
\end{figure}

The question is under which conditions it is possible to reconstruct $e(t)$ by using $e_y(t)$ and its derivatives. Indeed, the error system should be strongly observable with respect to the unknown input which is satisfied if and only if the original system is strongly observable.
\begin{proposition}[Strong observability of the error system]\ \label{le:strongobsverror}
	The triple $(A(t)-L(t)C(t),D(t),C(t))$ is strongly observable if and only if the triple $(A(t),D(t),C(t))$ is strongly observable.
\end{proposition}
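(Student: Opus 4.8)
The plan is to show invariance of strong observability under an output injection $L(t)C(t)$, mirroring the classical time‑invariant fact. First I would recall the definition: $(A-LC,D,C)$ is strongly observable if every solution of $\dot x = (A(t)-L(t)C(t))x + D(t)w(t)$ satisfying $C(t)x(t)\equiv 0$ on a non‑degenerate interval must vanish identically there, for any admissible $w$. The key observation is that along any trajectory on which the output is identically zero, $C(t)x(t)=0$, the injection term $L(t)C(t)x(t)$ is also identically zero. Hence on such an interval the two dynamics coincide pointwise: $\dot x = A(t)x + D(t)w(t)$ exactly when $\dot x = (A(t)-L(t)C(t))x + D(t)w(t)$, given $Cx\equiv 0$.

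The argument then proceeds symmetrically in both directions. Suppose $(A,D,C)$ is strongly observable. Take any $(x,w)$ with $\dot x = (A-LC)x + Dw$ and $Cx\equiv 0$ on the interval. Since $Cx\equiv 0$ we may rewrite $\dot x = Ax - LCx + Dw = Ax + Dw$, so $x$ is also a solution of the original system with the same zero output; strong observability of $(A,D,C)$ forces $x\equiv 0$. Conversely, suppose $(A-LC,D,C)$ is strongly observable, and take $(x,w)$ with $\dot x = Ax + Dw$ and $Cx\equiv 0$. Adding and subtracting $LCx = 0$ gives $\dot x = (A-LC)x + Dw$ with the same zero output, so strong observability of the injected system forces $x\equiv 0$. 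This establishes the equivalence.

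I would also remark, for completeness, that the hypotheses needed to invoke Theorem~\ref{thm:strongobservability} or the reconstruction formula in Proposition~\ref{prop:reconstruction} are preserved: if $A,C$ are sufficiently differentiable and $L$ is as well (which holds by Proposition~\ref{le:differentiability} when $C^TCQ$ has full rank), then $A-LC$ is sufficiently differentiable, and the constant‑rank property of the triple $(A-LC,D,C)$ follows because the generalized observability matrices $R_k$ are built from $C_i$ which transform in a triangular way under output injection — but strictly speaking this is not needed for the bare definitional equivalence above, which is what the proposition asserts.

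The proof is essentially immediate; there is no real obstacle, since the whole content is the trivial identity $L(t)C(t)x(t)=0$ on any zero‑output trajectory. The only point requiring mild care is that the definition of strong observability quantifies over \emph{arbitrary} inputs $w$, so one must be sure the same $w$ is used on both sides of each implication — which it is, by construction. If one wanted the stronger statement that the constant‑rank structure and observability index are also preserved, that would take a short additional computation showing the $C_i$ of $(A-LC)$ differ from those of $A$ by terms lying in the row span of lower‑order $C_j$'s, hence $\operatorname{rank} R_k$ is unchanged; I would include that only if the downstream use of Theorem~\ref{thm:strongobservability} demands it.
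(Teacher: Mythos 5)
Your proof is correct and uses the same core idea as the paper's: on any zero-output trajectory the injection term $L(t)C(t)x(t)$ vanishes, so the injected and original dynamics coincide and strong observability transfers in both directions. Your symmetric presentation is in fact slightly cleaner than the paper's (which argues one direction directly and the other by contradiction), and your closing remark about preservation of the constant-rank structure is a reasonable, clearly flagged optional addition.
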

\begin{proof}
	The assumption $e_y=0$ yields \mbox{$\dot{e}(t) = A(t)e(t) + D(t)w(t)$} for~\eqref{eq:errordynamics}. This implies that $e(t)\equiv 0$ if the triple $(A(t),D(t),C(t))$ is strongly observable which proofs sufficiency. For necessity, assume that $((A(t)-L(t)C(t),D(t),C(t))$ is not strongly observable which means that one can find an input $w(t)$ such that $e_y(t)=0$ and $e(t)\neq 0$. Thus it would be also possible to find $w(t)$ such that for system~\eqref{eq:system} $y(t)=u(t) = 0$ and $x(t)\neq 0$ holds, which is a contradiction.
\end{proof}
This guarantees that if the original system is strongly observable the estimation error can be reconstructed by using the output error and its derivatives following the ideas presented in~\cref{sec:observability}, see~\cite{Kratz1998}.

\subsection{Higher Order Sliding Mode Corrector}\label{sec:HOSMcorrector}
To determine the derivatives of the output error, Levant's robust exact differentiator \cite{Levant2003} is utilized in the cascaded observer and is thus briefly summarized here. Consider an unknown smooth signal $f_0(t)$ where the $r$-th derivative $f_0^{(r)}(t)$ exists and has a known Lipschitz constant i.e. $|f_0^{(r+1)}(t)|<L$. Then, the differentiator~\cite{Levant2003} is defined in the recursive form
\begin{equation}\small\label{eq:diff_levant}
\begin{aligned}
\dot{z}_0 &= v_0  = -\lambda_r L^{1/(r-1)}|z_0-f_0(t)|^{r/(r+1)}\text{sign}(z_0-f(t))+z_1,\\
\dot{z}_1 &= v_1 =  -\lambda_{r-1} L^{1/r}|z_1-v_0|^{(r-1)/r}\text{sign}(z_1-v_0)+z_2,\\
&\vdots \\
\dot{z}_r &= -\lambda_0 L \text{sign}(z_r-v_{r-1}),
\end{aligned}
\end{equation}
with sufficiently large parameters $\lambda_i$. An established choice of parameters for a differentiator up to order 5 is $\lambda_0=1.1$, $\lambda_1=1.5$, $\lambda_2=2$, $\lambda_3=3$, $\lambda_4=5$, $\lambda_5=8$, as proposed in~\cite{Levant2003}. It is shown in~\cite{Levant2003} that in the absence of noise the equations
\begin{equation}
|z_i-f_0^{(i)}(t)| = 0, \; i=0,\,\ldots, r
\end{equation}
hold after a finite transient time and thus this differentiator can be used to exactly reconstruct the derivatives of $f_0(t)$.\\ %This allows to reconstruct the estimation error as summarized subsequently.
This differentiator allows to reconstruct the observer error $e(t)$ by applying it component wise to the output error $e_y(t)=C(t)e(t)$. The reconstruction can be carried out by applying~\cref{prop:reconstruction} to the error system, which yields
	\begin{equation} \label{eq:reconstructiondiff}
	\tilde{e}(t) = H_e^{-1}(t)R_{\nu,e}^T(t)K_e^T(t)K_e(t)\hat{e}_y(t).
	\end{equation}
	Here, $\tilde{e}(t)$ is the estimate for $e(t)$, $H_e(t)$ and $K_e(t)$ are designed for the error system according to~\cref{prop:reconstruction} and $R_{\nu,e}$ is the observability matrix for $(A(t)-L(t)C(t),C(t))$. The output error and its derivatives are combined in
	\begin{equation}\label{eq:errorderivatives}
	\hat{e}_y = \begin{bmatrix}
	e_y^T & \dot{e}_y^T & \cdots & \left(e_y^{(\nu-1)}\right)^T
	\end{bmatrix}^T.
	\end{equation}
	If the smoothness requirements stated in\cref{le:differentiability} together with the strong regularity assumptions in~\cref{thm:stability error system} are fulfilled, the derivatives are bounded. Thus, the HOSM differentiator~\eqref{eq:diff_levant} can be applied component wise to $e_y$ which yields
	\begin{equation}
	\tilde{e}_y = \begin{bmatrix}
	e_y^T & \dot{\tilde{e}}_y^T & \cdots & \left(\tilde{e}_y^{(\nu-1)}\right)^T
	\end{bmatrix}^T,
	\end{equation}
	where $\dot{\tilde{e}}_y,\;\ldots,\;\tilde{e}_y^{(\nu-1)}$ are the estimated derivatives.
	For sufficiently large differentiator gains,~\eqref{eq:errorderivatives} can be reconstructed exactly and
	\begin{equation}
	\hat{e}_y(t) = \tilde{e}_y(t)
	\end{equation}
	holds after a finite transient time $t_f$~\cite{Levant2003}. Hence, it is possible to replace $\hat{e}_y$ with $\tilde{e}_y$ in~\eqref{eq:reconstructiondiff}. This convergence can be made theoretically arbitrarily fast by growing the differentiator gains~\cite{Levant2003}.

\subsection{Sliding mode tangent space observer}\label{sec:cascaded observer}
This section combines the previous results in order to construct the cascaded observer using a step by step design procedure. Let system~\eqref{eq:system} be strongly observable with respect to a bounded input $w(t)$ and constant rank with observability index $\nu$. To design the cascaded observer the following steps have to be carried out
\begin{enumerate}
	\item [i)] Determine the dimension of the non-stable subspace $k$ by approximating the Lyapunov exponents utilizing the continuous QR-decomposition, see~\cref{sec:continuousQR}. The number of non-negative Lyapunov exponents equals $k$.
	\item [ii)] Check if the directional detectability condition holds for the non-stable tangent subspace and if the smoothness and strong regularity assumption of~\cref{le:differentiability} and~\cref{thm:stability error system} hold.
	\item [iii)] Verify the strong forward regularity requirement according to~\cref{thm:stability error system} and choose the observer parameter $p>0$ such that this condition holds for the non-stable subspace.
	\item [iv)] Determine the matrices $R_{\nu,e}(t)$, $K_e(t)$ and $H_e(t)$ for the error system using~\cref{prop:reconstruction}.
	\item [v)] Compute the $\nu-1$ derivatives of the output error $e_y$ by component-wise application of the HOSM differentiator~\eqref{eq:diff_levant}. Use this derivatives to reconstruct the estimation errors $\tilde{e}$, see~\eqref{eq:reconstructiondiff}.
	\item [vi)] Correct the observer estimates $\tilde{x}$ according to
	\begin{equation}
	\hat{x}(t)=\tilde{x}(t)+\tilde{e}(t).
	\end{equation}
	As the sliding mode differentiator converges in finite time for sufficiently large differentiator gains,
	\begin{equation}
	x(t)=\hat{x}(t) \;\; \forall\, t\geq t_f
		\end{equation}
	 where $t_f$ represents a finite time instant.
\end{enumerate}

\section{Numerical Examples}\label{sec:example}

Consider a system in the form of~\eqref{eq:system} with the time varying dynamic matrix as
\newlength{\oldarraycolsep} % save old setting
\setlength{\oldarraycolsep}{\arraycolsep}

\setlength{\arraycolsep}{0.3em}

\begin{equation}%\footnotesize
A(t) = \begin{bmatrix*}[r]
\minus 1.3  &  a_{12}  &  0  &   0 &  a_{15}  &  \minus0.12 &  0.42   &  0.92\phantom{1} \\
\minus a_{12} & \minus3.2  &  a_{23} &  0.51 &  a_{25} & \minus 0.23  &    0  & \minus 0.31\phantom{1}\\
0  &  \minus a_{23} &   \minus 4.4 &   0.48 &  \minus 0.80   &      0.53 &    0  &   0.17\phantom{1} \\
0  &   \minus0.51  &   \minus 0.48 &   3.35  &  0.64   &      0.59  &   0   &      a_{48} \phantom{1}\\
\minus a_{15} &    \minus a_{25} &  0.80  & \minus 0.64 &  1.80  & \minus 0.62 &   0.31 &   0.50\phantom{1} \\
0.12  &  0.23  &   \minus 0.53 &  -0.59  &   0.62  & \minus 2.45  &  \minus 0.67  & \minus 0.48\phantom{1} \\
\minus 0.42 &  0  &      0   &      0    &     \minus 0.31 &    0.67  &     \minus 3.47  &  0\phantom{1} \\
\minus 0.92 &   0.31  & \minus 0.17 &   \minus a_{48} & \minus 0.50   & 0.48 &    0 &   \minus 4.71\phantom{1}
\end{bmatrix*}
\end{equation}
with
\begin{equation*}%\small
\begin{array}{lll}
a_{12} = 0.23\sin(0.5t), & a_{15} = \minus 0.25\sin(0.5t),   \\
a_{23} = 0.083\sin(0.3t), & a_{25} = 0.09\sin(0.3t), \\
a_{48} = \minus 0.055\sin(0.3t).
\end{array}
\end{equation*}
Moreover,
\begin{equation}%\footnotesize
\begin{aligned}
B^T &= \begin{bmatrix*}
0 & 0 & 0 & 0 & 0 & 0 & 0.67 & 0.8 \\
0.98 & 0.63 & 0 & 0.54 & 0.54 & 0 & 0 & 0
\end{bmatrix*},\\
D^T &= \begin{bmatrix*}
0 & 0 & 0 & 0 & 0.89 & 1.4 & 0 & 0
\end{bmatrix*},
\end{aligned}
\end{equation}
and
\begin{equation}%\footnotesize
C =  \begin{bmatrix}
1 & 0 & 0 & 0& 0& 0& 0& 0\\
0& 1& 0& 0& 0& 0& 0& 0 \\
0& 0& 1& 0& 0& 0& 0& 0 \\
0& 0& 0& 1& 0& 0& 0& 0
\end{bmatrix}.
\end{equation}
The unknown input is specified as
\begin{equation}
w(t) = 0.3+10\sin(2\pi 0.1 t) + 3\sin(2\pi 0.4t).
\end{equation}
As the system is unstable the simulations are carried out for the stabilized system. However, as $u(t)$ is known, the controller design does not influence the observer error dynamics. The integration of~\eqref{eq:diffQ} is carried out using a so-called projected RK4 which is a standard fourth order Runge-Kutta (RK4) in combination with a modified Gram-Schmidt orthogonalization to keep $Q$ orthogonal~\cite{Dieci1997}. The remaining part of the simulation is carried out using RK4 integration. The step size is \SI{1}{ms}. The sliding mode differentiator is implemented in discrete time using the toolbox presented in~\cite{Reichhartinger2017} to avoid discretization chattering. \par
It can be verified that $(A(t),D(t),C(t))$ is a constant rank system with observability index $\nu=2$. The non-stable tangent space is of dimension $k=2$, which was determined by approximating the largest three Lyapunov exponents over a simulation duration of \SI{200}{s}. The results together with the determined asymptotes are depicted in~\cref{fig:lyapexponents}. It is sufficient to solve~\eqref{eq:diffQ} for the reduced dimension $k=2$ and the tuning parameter of the observer is selected as $p=30$. The system states are depicted in~\cref{fig:states} and the norm of the reconstruction error \mbox{$\|x(t)-\hat{x}(t)\|=\sqrt{(x-\hat{x})^T(x-\hat{x})}$} is shown in~\cref{fig:errornorm}.

The error of the observer without correction is bounded despite the unknown input. Due to the numerical implementation the error of the cascaded observer (Obsv. + HOSM) converges to a small vicinity of the origin. Note that due to the chosen discretization scheme for the HOSM differentiator, discretization chattering is avoided successfully.
%%%%%%%%%%%%% Plots of results
\begin{figure}%[htbp]
	\centering
	\includegraphics[width=0.6\linewidth]{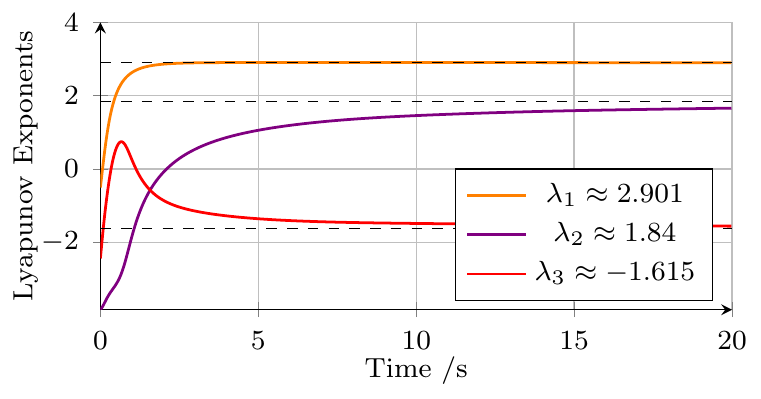}
	\caption{Approximated Lyapunov exponents together with the corresponding asymptotes.}
	\label{fig:lyapexponents}
\end{figure}

\begin{figure}%[htbp]
	\centering
	\includegraphics[width=0.9\linewidth]{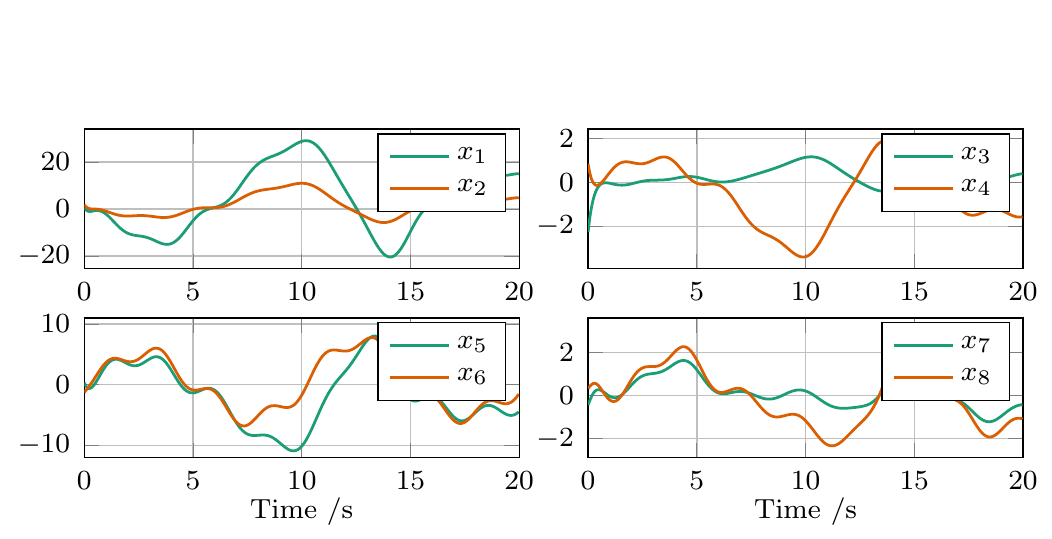}
	\caption{System states.}
	\label{fig:states}
\end{figure}

\begin{figure}%[htbp]
	\centering
	\includegraphics[width=0.7\linewidth]{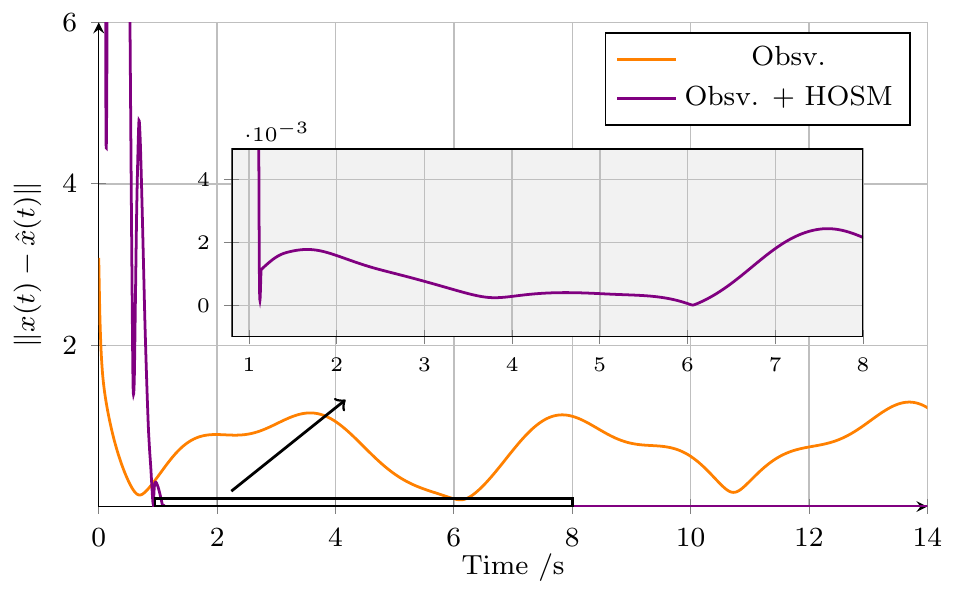}
	\caption{Norm of the estimation error for TSO and and TSO with HOSM corrector.}
	\label{fig:errornorm}
\end{figure}

The logarithmic estimation errors are shown in~\cref{fig:logerror}. It is evident that the estimation errors for the measured states $x_1$ to $x_4$ are very small as expected. As the states $x_5$ to $x_8$ are not directly measured, the corresponding estimation errors are larger but still have an order of magnitude of approximately $10^{-3}$ and less.
\begin{figure}%[htbp]
	\centering
	\includegraphics[width=0.7\linewidth]{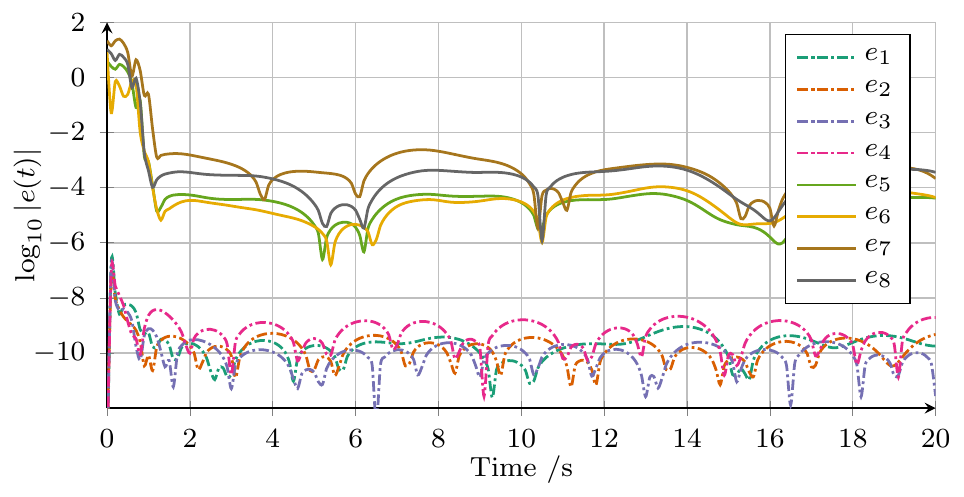}
	\caption{Logarithmic estimation error for all states.}
	\label{fig:logerror}
\end{figure}

Finally, a simulation is carried out adding Gaussian measurement noise with standard deviation \mbox{$\sigma = 10^{-3}$}. For the reconstruction, a filtered version of $e_y$ is used in $\hat{e}_y$ of~\eqref{eq:reconstructiondiff} which is inherently generated by the HOSM differentiator. This is done to mitigate the measurement noise in the reconstruction. The result for the estimates of $x_7$ and $x_8$, which show the largest errors in the noise free case, are depicted in~\cref{fig:noisystates}. It can be concluded that the proposed method achieves satisfying results also in the presence of small measurement noise.

\begin{figure}%[htbp]
	\centering
	\includegraphics[width=0.55\linewidth,trim={0 0 0 1.5cm},clip]{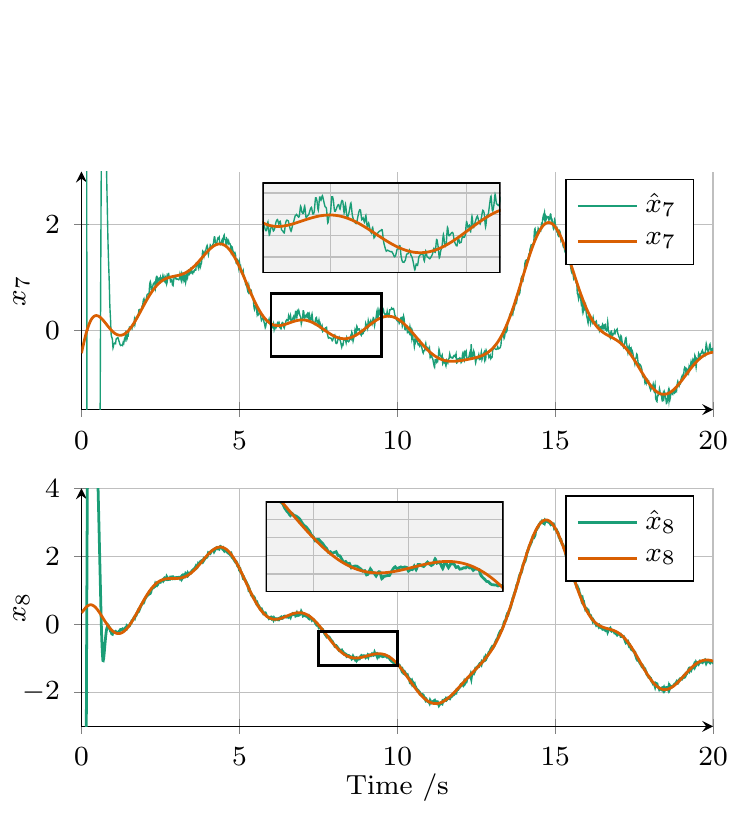}
	\caption{True and reconstructed states $x_7$ and $x_8$ for noisy measurements.}
	\label{fig:noisystates}
\end{figure}

%%%%%%%%%%%%%%%%%%%%%%%%%%%%%%%%%%%%%%%%%%%%%%%%%%%%%%%%%%%%%%%%%%%%%%%%%%%%%%%%
\section{CONCLUSION AND OUTLOOK}\label{sec:conclusion}
This paper discusses stability and robustness properties of a recently proposed observer algorithm for linear time varying systems. The observer is numerically efficient, especially if the non-stable subspace is of low dimension. The resulting error system is analyzed with respect to uniformly bounded disturbance inputs and conditions for bounded input bounded state stability based on Lyapunov exponents are presented. Based on this observer, a finite time exact state reconstruction scheme utilizing higher order sliding mode differentiators is presented for a special class of linear time varying systems. Future topics of interest are the relaxation of the assumed smoothness properties, especially the differentiability assumption on the unknown input and the extension of the concept to nonlinear systems.

\section*{Acknowledgments}\ \\
This work was partially supported by the Graz University of Technology LEAD project ``Dependable Internet of Things in Adverse Environments''.  L. Fridman acknowledges the Financial support of CONACyT (Consejo Nacional de Ciencia y Tecnologa): Project 282013, PAPIIT-UNAM (Programa de Apoyo
a Proyectos de Investigación e Innovación Tecnológica) IN
113216, PASPA DGAPA of UNAM.

\FloatBarrier

%\printbibliography
\bibliography{Literatur}

\begin{thebibliography}{10}

\bibitem{Adrianova1995}
L.~Y. Adrianova.
\newblock {\em Introduction to Linear Systems of Differential Equations
  (Translations of Mathematical Monographs)}.
\newblock American Mathematical Society, 1995.

\bibitem{Barreira2002}
L.~Barreira and Y.~B. Pesin.
\newblock {\em Lyapunov Exponents and Smooth Ergodic Theory (University Lecture
  Series)}.
\newblock Amer Mathematical Society, 2002.

\bibitem{Barreira2010}
L.~Barreira and C.~Valls.
\newblock Admissibility for nonuniform exponential contractions.
\newblock {\em Journal of Differential Equations}, 249(11):2889--2904, dec
  2010.

\bibitem{Bejarano2007}
F.~J. Bejarano, A.~Poznyak, and L.~Fridman.
\newblock Hierarchical second-order sliding-mode observer for linear time
  invariant systems with unknown inputs.
\newblock {\em International Journal of Systems Science}, 38(10):793--802, Oct.
  2007.

\bibitem{JIE1996}
J.~Chen, R.~J. Patton, and H.-Y. Zhang.
\newblock Design of unknown input observers and robust fault detection filters.
\newblock {\em International Journal of Control}, 63(1):85--105, Jan. 1996.

\bibitem{Chen2017}
L.~Chen, C.~Edwards, and H.~Alwi.
\newblock On the synthesis of variable structure observers for {LPV} systems.
\newblock In {\em 2017 {IEEE} 56th Annual Conference on Decision and Control
  ({CDC})}. {IEEE}, Dec. 2017.

\bibitem{Dieci1999}
L.~Dieci and T.~Eirola.
\newblock On smooth decompositions of matrices.
\newblock {\em {SIAM} Journal on Matrix Analysis and Applications},
  20(3):800--819, Jan. 1999.

\bibitem{Dieci1997}
L.~Dieci, R.~D. Russell, and E.~S.~V. Vleck.
\newblock On the compuation of lyapunov exponents for continuous dynamical
  systems.
\newblock {\em {SIAM} Journal on Numerical Analysis}, 34(1):402--423, Feb.
  1997.

\bibitem{Edwards2000}
C.~Edwards, S.~K. Spurgeon, and R.~J. Patton.
\newblock Sliding mode observers for fault detection and isolation.
\newblock {\em Automatica}, 36(4):541--553, Apr. 2000.

\bibitem{Frank2014}
J.~Frank and S.~Zhuk.
\newblock Symplectic m{\"o}bius integrators for {LQ} optimal control problems.
\newblock In {\em 53rd {IEEE} Conference on Decision and Control}. {IEEE}, Dec.
  2014.

\bibitem{Frank2017}
J.~Frank and S.~Zhuk.
\newblock A detectability criterion and data assimilation for non-linear
  differential equations.
\newblock {\em arXiv 1711.05039v1}, 2017.

\bibitem{Fridman2007}
L.~Fridman, A.~Levant, and J.~Davila.
\newblock Observation of linear systems with unknown inputs via high-order
  sliding-modes.
\newblock {\em International Journal of Systems Science}, 38(10):773--791, Oct.
  2007.

\bibitem{Galvan-Guerra2016}
R.~Galv{\'{a}}n-Guerra, L.~Fridman, and J.~D{\'{a}}vila.
\newblock High-order sliding-mode observer for linear time-varying systems with
  unknown inputs.
\newblock {\em International Journal of Robust and Nonlinear Control},
  27(14):2338--2356, Nov. 2016.

\bibitem{Hautus1983}
M.~Hautus.
\newblock Strong detectability and observers.
\newblock {\em Linear Algebra and its Applications}, 50:353--368, Apr. 1983.

\bibitem{Ichalal2015a}
D.~Ichalal and S.~Mammar.
\newblock On unknown input observers for {LPV} systems.
\newblock {\em {IEEE} Transactions on Industrial Electronics},
  62(9):5870--5880, Sept. 2015.

\bibitem{Kalman1960_optimalcontrol}
R.~E. Kalman.
\newblock Contributions to the theory of optimal control.
\newblock 1960.

\bibitem{Kalman1960}
R.~E. Kalman and J.~E. Bertram.
\newblock Control system analysis and design via the {\textquotedblleft}second
  method{\textquotedblright} of lyapunov: I{\textemdash}continuous-time
  systems.
\newblock {\em Journal of Basic Engineering}, 82(2):371, 1960.

\bibitem{Kratz1998}
W.~Kratz and D.~Liebscher.
\newblock A local characterization of observability.
\newblock {\em Linear Algebra and its Applications}, 269(1-3):115--137, Jan.
  1998.

\bibitem{Levant2003}
A.~Levant.
\newblock Higher-order sliding modes, differentiation and output-feedback
  control.
\newblock {\em International Journal of Control}, 76(9-10):924--941, Jan. 2003.

\bibitem{oseledec}
V.~I. Oseledec.
\newblock Multiple ergodic theorem. lyapunov characteristic numbers for
  dynamical systems.
\newblock {\em Trudy Mosk. Mat. Obsc 1}, (19):197, 1968.

\bibitem{Patton1989}
R.~Patton and P.~Frank.
\newblock {\em Fault Diagnosis in Dynamic Systems: Theory and Application
  (Prentice Hall International Series in Systems and Control Engineering)}.
\newblock Prentice Hall, 1989.

\bibitem{Perron1930}
O.~Perron.
\newblock Die stabilitätsfrage bei differentialgleichungen.
\newblock {\em Mathematische Zeitschrift}, 32:703--728, 1930.

\bibitem{Pillosu2011}
S.~Pillosu, E.~Usai, and A.~Pisano.
\newblock Decentralised state estimation for linear systems with unknown
  inputs: a consensus-based approach.
\newblock {\em {IET} Control Theory {\&} Applications}, 5(3):498--506, Feb.
  2011.

\bibitem{Reichhartinger2017}
M.~Reichhartinger, S.~Spurgeon, M.~Forstinger, and M.~Wipfler.
\newblock A robust exact differentiator toolbox for
  matlab{\textregistered}/simulink{\textregistered}.
\newblock {\em {IFAC}-{PapersOnLine}}, 50(1):1711--1716, July 2017.

\bibitem{Silverman1971}
L.~Silverman.
\newblock Realization of linear dynamical systems.
\newblock {\em {IEEE} Transactions on Automatic Control}, 16(6):554--567, dec
  1971.

\bibitem{Silverman1968}
L.~M. Silverman and B.~D.~O. Anderson.
\newblock Controllability, observability and stability of linear systems.
\newblock {\em {SIAM} Journal on Control}, 6(1):121--130, feb 1968.

\bibitem{Silverman1969}
L.~M. Silverman and H.~E. Meadows.
\newblock Equivalent realizations of linear systems.
\newblock {\em {SIAM} Journal on Applied Mathematics}, 17(2):393--408, mar
  1969.

\bibitem{Tai1987}
H.-M. Tai.
\newblock Equivalent characterizations of detectability and stabilizability for
  a class of linear time-varying systems.
\newblock {\em Systems {\&} Control Letters}, 8(5):425--428, may 1987.

\bibitem{Willems2004}
J.~Willems.
\newblock Deterministic least squares filtering.
\newblock {\em Journal of Econometrics}, 118(1-2):341--373, jan 2004.

\bibitem{Wonham1968}
W.~M. Wonham.
\newblock On a matrix riccati equation of stochastic control.
\newblock {\em {SIAM} Journal on Control}, 6(4):681--697, nov 1968.

\bibitem{SZAPOGN_TAC16}
S.~Zhuk, A.~Polyakov, and O.~Nakonechniy.
\newblock Note on minimax sliding mode control design for linear systems.
\newblock {\em IEEE Transaction on Automatic Control}, 62(7):3395--3400, July
  2017.

\end{thebibliography}

\end{document}